\documentclass{amsart}
\usepackage{tikz}

\usepackage{amsmath, amsthm, xypic, amssymb, enumerate, mathrsfs, eufrak, verbatim, graphicx, bbm, stackrel,MnSymbol}
\usepackage{color}
\usepackage[toc,page]{appendix}
\usepackage[mathscr]{euscript}
\usepackage[all,cmtip,2cell]{xy}
\xyoption{rotate}
\UseTwocells

\usepackage[all]{xy}
\usepackage{hyperref}
\usepackage{tikz}
\usepackage{tikz-3dplot}

  \addtolength{\oddsidemargin}{-.25in}
  \addtolength{\evensidemargin}{-.25in}
  \addtolength{\textwidth}{.5in}

\DeclareMathOperator{\col}{colim}
\DeclareMathOperator*{\hc}{hocolim}

\DeclareMathOperator{\Hom}{Hom}

\newcommand{\cC}{\mathcal{C}}
\newcommand{\sD}{\mathcal{D}}

\newcommand{\cO}{\mathcal{O}}

\newcommand{\cX}{\mathcal{X}}

\newcommand{\NN}{\mathbb{N}}



\def\mapdown#1.{\Big\downarrow\rlap{$\vcenter{\hbox{$\scriptstyle#1$}}$}}

\newtheorem{thm}{Theorem}[section]

\newtheorem*{thm*}{Theorem}
\newtheorem{prop}[thm]{Proposition}

\newtheorem{lemma}[thm]{Lemma}

\theoremstyle{definition}
\newtheorem{defn}[thm]{Definition}

\theoremstyle{definition}
\newtheorem{ex}[thm]{Example}

\theoremstyle{definition}
\newtheorem{rem}[thm]{Remark}

\theoremstyle{definition}

\theoremstyle{definition}

\newtheorem*{rem*}{Remark}
\newtheorem{exA}{Example}

\newtheoremstyle{TheoremForIntro} 
        {.6em}{.6em}              
        {\itshape}                      
        {}                              
        {\bfseries}                     
        {. }                             
        { }                             
        {\thmname{#1}\thmnote{ \bfseries #3}}
    \theoremstyle{TheoremForIntro}
    \newtheorem{TheoremIntro}[thm]{Theorem}

\input xy
\xyoption{all}

\newcommand{\itop}{\mathsf{isvt\text{-}Top}}
\newcommand{\etop}{\mathsf{eqvt\text{-}Top}}
\newcommand{\topp}{\mathsf{Top}}
\newcommand{\strat}{\mathsf{strat}}
\newcommand{\imap}{\mathsf{Map_{isvt}}}
\newcommand{\emap}{\mathsf{Map_{eqvt}}}
\newcommand{\map}{\mathsf{Map}}
\newcommand{\fun}{\mathsf{Fun}}

\newcommand{\term}{\triangleright}

\title{An isovariant Elmendorf's theorem}
\author{Sarah Yeakel}

\begin{document}

\begin{abstract}
An isovariant map between spaces with a group action is an equivariant map which preserves isotropy groups. In this paper, we show that for a finite group $G$, the category of $G$-spaces with isovariant maps has a Quillen model structure.  We prove a Piacenza-style model theoretic proof of an isovariant Elmendorf's theorem, showing that this model structure is Quillen equivalent to a model category of diagrams.
\end{abstract}

\maketitle
\thispagestyle{empty}


\section*{Introduction}

Let $G$ be a finite group and let $X$ and $Y$ be compactly generated spaces with continuous left $G$-actions. 
	A map $f:X \to Y$ is \emph{equivariant} if it preserves the $G$-action, so $g \cdot f(x)=f(g \cdot x)$. One consequence of equivariance is that $f$ can increase isotropy subgroups, that is, $G_x \subseteq G_{f(x)}$, where $G_x=\{g \in G: g \cdot x=x\}$. A map $f:X \to Y$ is \emph{isovariant} if it is equivariant and $G_x=G_{f(x)}$ for all $x \in X$.

\begin{exA} \label{exD}
	Let $C_2$ be the cyclic group with two elements. Consider the one-point space $\ast$ with trivial $C_2$-action and the unit disk $D^2=\{(x,y) \in \mathbb{R}^2: x^2 + y^2 \leq 1\}$ with the $C_2$-action which reflects across the $y$-axis. Any map from $\ast$ to $D^2$ whose image is on the $y$-axis is both equivariant and isovariant. In fact, any injective equivariant map is isovariant. On the other hand, the map $D^2 \to \ast$ is equivariant, but not isovariant.
\end{exA} 

\begin{center}
\begin{tikzpicture}[scale=.45]
	\node[blue] at (-5,0) {$\ast$};
	\draw[->] (-4.5,0)--(-2.5,0);
	\node[above] at (-3.5,0) {eqvt};
	\node[below] at (-3.5,0) {isvt};
	\draw[<->] (-1,2.3) to [out=45,in=135] (1,2.3);
	\filldraw[fill=black!20] (0,0) circle (2cm);
	\filldraw[blue] (0,2) circle (1.3pt);
	\filldraw[blue] (0,-2) circle (1.3pt);
	\node[blue] at (0,1.3) {$\ast$};
	\draw[blue] (0,2)--(0,-2);
	\draw[->] (2.5,0)--(4.5,0);
	\node[above] at (3.5,0) {eqvt};
	\node[below] at (3.5,0) {not isvt};
	\node[blue] at (5,0) {$\ast$};
\end{tikzpicture}
\end{center}

Equivariant homotopy theory has led to important computations in the nonequivariant setting, and isovariant maps occur naturally in the study of surgery theory and classification questions for manifolds \cite{browderquinn}. The usual approach in the isovariant setting is to require extra assumptions on the dimensions of gaps between isotropy subspaces under which isovariant and equivariant homotopy equivalences coincide \cite{dulaschultz,schultzgap}. The further development of isovariant homotopy theory could provide an alternative to gap hypothesis assumptions. 
In this paper, we prove isovariant analogues of equivariant categorical foundations for finite groups with the expectation that the extra homotopical structure will yield new computational techniques. We motivate the definitions of the isovariant orbit category $\mathcal{L}_G$ and linking simplices $\Delta^\bullet_G$ by defining isovariant cell complexes. Assuming familiarity with cofibrantly generated model structures \cite{hovey}, we define a Quillen model structure on the category of $G$-spaces with isovariant maps $\itop$. Introducing a formal terminal object allows us to show that $\itop^\term$ is Quillen equivalent to presheaves on the isovariant orbit category. Our main result is the following isovariant analogue of Elmendorf's theorem.

\begin{TheoremIntro}[\ref{mainthm}]
For a finite group $G$, the following adjunction is a Quillen equivalence.
	\[
\Delta^\bullet_G \otimes_{\mathcal{L}_G} -: \xymatrix{ \mathsf{Fun}(\mathcal{L}^{op}_G, \mathsf{Top}) \ar@<1ex>[r] & \itop^\term \ar@<1ex>[l]}: \imap(\Delta^\bullet_G,-)
	\]
\end{TheoremIntro}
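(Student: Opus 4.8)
The plan is to run the Piacenza-style argument familiar from the equivariant Elmendorf theorem. Both categories in sight are cofibrantly generated model categories: $\itop$ carries the model structure constructed in the previous sections, and I equip $\mathsf{Fun}(\mathcal{L}^{op}_G,\mathsf{Top})$ with the projective model structure, in which weak equivalences and fibrations are detected objectwise and the generating (trivial) cofibrations are the maps $\mathcal{L}_G(-,c)\times j$ with $c$ an object of $\mathcal{L}_G$ and $j$ a generating (trivial) cofibration of spaces; this exists because $\mathsf{Top}$ is cofibrantly generated and $\mathcal{L}_G$ is small \cite{hovey}. Write $L=\Delta^\bullet_G\otimes_{\mathcal{L}_G}-$ and $R=\imap(\Delta^\bullet_G,-)$.

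First I would verify that $(L,R)$ is a Quillen adjunction. Since fibrations and trivial fibrations in the projective structure are objectwise, it is enough to show that $R$ sends each (trivial) fibration of $\itop$ to an objectwise (trivial) fibration, equivalently that $L$ sends the generating (trivial) cofibrations $\mathcal{L}_G(-,c)\times j$ to (trivial) cofibrations of $\itop$. The co-Yoneda computation $L\bigl(\mathcal{L}_G(-,c)\bigr)\cong\Delta^c_G$, together with the description of isovariant cell complexes from the earlier sections, identifies $L(\mathcal{L}_G(-,c)\times j)$ with the corresponding isovariant cell attachment, so this holds.

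The heart of the proof is the computation that the isovariant mapping space between two linking simplices recovers a hom-space of the isovariant orbit category,
\[
\imap(\Delta^d_G,\Delta^c_G)\;\simeq\;\mathcal{L}_G(d,c),
\]
naturally in $c$ and $d$; equivalently $RL\bigl(\mathcal{L}_G(-,c)\bigr)=\imap(\Delta^\bullet_G,\Delta^c_G)\simeq\mathcal{L}_G(-,c)$, with the unit of the adjunction realizing the equivalence. This is the isovariant analogue of the trivial equivariant identity $\map_G(G/K,G/H)=(G/H)^K=\mathcal{O}_G(G/K,G/H)$. Granting it, I would promote ``the unit $F\to RLF$ is a weak equivalence'' from representables to all cofibrant $F$ by induction over a cell presentation: $L$ preserves all colimits, so one only needs $R$ to carry pushouts of $\itop$ along generating cofibrations, transfinite compositions, and retracts to objectwise homotopy colimits; compactness of the $\Delta^c_G$ deals with the filtered colimits, and the pushouts are handled by expressing $RLF$ via a bar construction and using that $\imap(\Delta^c_G,-)$ commutes with geometric realization of the relevant Reedy-cofibrant simplicial objects. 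Finally, the weak equivalences of $\itop$ are, by the way the model structure was constructed, exactly the maps that $R$ sends to objectwise weak equivalences, so $R$ creates weak equivalences; hence for cofibrant $F$, fibrant $X$, and $g\colon LF\to X$, factoring the adjoint as $F\xra{\sim}RLF\xrightarrow{Rg}RX$ shows $g^\flat$ is a weak equivalence iff $Rg$ is iff $g$ is. Together with the derived-unit computation this gives that $(L,R)$ is a Quillen equivalence.

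The main obstacle is the displayed identification $\imap(\Delta^d_G,\Delta^c_G)\simeq\mathcal{L}_G(d,c)$. In the equivariant case the orbits $G/H$ have no self-maps beyond those recorded in $\mathcal{O}_G$ and fixed points compute mapping spaces on the nose, but the linking simplices $\Delta^\bullet_G$ are built precisely to encode the rigidity of isovariant maps, so controlling their isovariant mapping spaces requires a careful analysis of the simplicial/combinatorial data defining them and of the morphisms of $\mathcal{L}_G$. A secondary difficulty is the colimit bookkeeping in the cell induction, since pushouts along isovariant cofibrations and the commutation of $\imap(\Delta^c_G,-)$ with them are subtler than the analogous equivariant statements about fixed points.
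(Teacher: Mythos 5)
Your overall skeleton is the same as the paper's: a Piacenza-style transfer, a formal check that the adjunction is Quillen, the identification $\imap(\Delta^{\mathbf{K}}_G,\Delta^{\mathbf{H}}_G)\simeq\mathcal{L}_G(\mathbf{K},\mathbf{H})$ on representables, a cell induction to promote the unit to all cofibrant diagrams, and the observation that $\imap(\Delta^\bullet_G,-)$ detects weak equivalences by construction, so 2-out-of-3 finishes the argument. However, there is a genuine gap at the step you dispatch with ``the pushouts are handled by expressing $RLF$ via a bar construction and using that $\imap(\Delta^c_G,-)$ commutes with geometric realization of the relevant Reedy-cofibrant simplicial objects.'' For a linking simplex of positive dimension, $\imap(\Delta^{\mathbf{H}}_G,-)$ is not a fixed-point functor and does not preserve colimits on the nose: a simplex mapping into a (homotopy) pushout need not land in either leg, so the comparison map $\hc L(\cX)\to L(\hc\cX)$ is far from an isomorphism. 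Since geometric realization is itself assembled from pushouts and sequential colimits, asserting commutation with realization is not an independent tool here --- it is a restatement of exactly the claim that needs proof. This is the paper's Lemma \ref{linkscellular}, its one nonformal ingredient, and it is proved by a genuinely analytic argument: one defines a lower semicontinuous ``first exit time'' function on the space of simplices in the double mapping cylinder, uses compactness of a disk of simplices to extract a uniform truncation parameter, truncates each simplex so it moves at most $1/4$ in the cylinder coordinate, and then flattens the truncated simplices into the appropriate leg or cylinder slice. Nothing of this sort appears in your proposal, and without it the cell induction does not go through.

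A secondary, smaller omission: you correctly isolate $\imap(\Delta^{\mathbf{K}}_G,\Delta^{\mathbf{H}}_G)\simeq\mathcal{L}_G(\mathbf{K},\mathbf{H})$ as essential but give no argument. In the paper this is comparatively soft: isovariance forces $\mathbf{K}$ to be a subchain of $\mathbf{H}$ and equivariance pins down a multi-Weil group element, giving $\pi_0$ equal to the hom-set, and convexity of the simplex gives a straight-line isovariant contraction of each component onto the standard map $\Delta^{(\iota,\gamma)}$. So you have the difficulty inverted: the representable computation is the easy part, and the homotopical cellularity of $\imap(\Delta^{\mathbf{H}}_G,-)$ is where the real work lies.
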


\subsection*{Equivariant precursors}

Let $\topp$ denote the convenient category of compactly generated spaces, which are not necessarily weak Hausdorff (see Remark \ref{referee}). 
While much of equivariant homotopy theory has been developed for compact Lie groups, in this paper we work only with finite groups, and we fix a finite group $G$.
Let $\etop$ denote the category of $G$-spaces with equivariant maps, and let $\itop$ denote the category of $G$-spaces with isovariant maps. 
The category $\etop$ is enriched in spaces, and thus $\itop$ is also enriched in spaces using the subspace topology. We will denote the space of isovariant maps from $X$ to $Y$ by $\imap(X,Y)$.

An important tool for working with $\etop$ is the orbit category for the group $G$. The $G$-orbit category $\mathcal{O}_G$ has as objects the $G$-sets $G/H$ for all subgroups $H\leq G$, and morphisms $G/H \to G/K$ are equivariant maps.

By adjunction, there is a natural homeomorphism $\emap(G/H, G/K)\cong (G/K)^H$. An equivariant map $G/H \to G/K$ is completely determined by its value on the identity coset, $eH \mapsto \gamma K$. Thus a morphism $R_\gamma:G/H \to G/K$ in $\mathcal{O}_G$ exists if and only if $H$ is subconjugate to $K$, that is, $\gamma H\gamma^{-1} \leq K$. For $H=K$, such elements $\gamma \in G$ constitute the Weyl group $\gamma \in N_G(H)/H$.

There is a cofibrantly generated model structure on the category $\topp$ of spaces where the generating cofibrations are given by $\{S^{n-1} \to D^n\}$ and generating acyclic cofibrations are given by $\{D^n \times \{0\} \to D^n \times [0,1]\}$ \cite[2.4]{hovey}. The family of adjunctions below allows this model structure to transfer to $\etop$ \cite{stephanelm}. 
\[
\left\{G/H \times -:\xymatrix{\mathsf{Top} \ar@<1ex>[r] & \etop \ar@<1ex>[l] } : \emap(G/H, -) \right \}_{G/H \in \mathcal{O}_G}
\]

In \cite{elmthm}, Elmendorf explicitly shows that the equivariant homotopy theory of $G$-spaces (for $G$ a compact Lie group) agrees with the homotopy theory of diagrams indexed on the orbit category. In \cite{alaskanotes}, the result is extended to any topological group, and in \cite{piacenza}, the theorem is reformulated to a Quillen equivalence of model categories.

\begin{thm*}
	[\cite{piacenza}] The following adjunction is a Quillen equivalence.
	\[ eval_{G/e}:
\xymatrix{ \mathsf{Fun}(\mathcal{O}^{op}_G, \mathsf{Top}) \ar@<1ex>[r] & \etop \ar@<1ex>[l]}: \emap(\bullet, -)
	\]
\end{thm*}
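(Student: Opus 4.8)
The plan is to exhibit both sides as cofibrantly generated model categories and then apply the standard criterion that a Quillen adjunction is a Quillen equivalence as soon as its right adjoint reflects weak equivalences between fibrant objects and its derived unit is a weak equivalence on every cofibrant object. On $\fun(\mathcal{O}^{op}_G,\topp)$ I would use the projective model structure, in which weak equivalences and fibrations are detected objectwise and the generating cofibrations (respectively acyclic cofibrations) are $\mathcal{O}_G(-,G/H)\times(S^{n-1}\hookrightarrow D^n)$ (respectively $\mathcal{O}_G(-,G/H)\times(D^n\times\{0\}\hookrightarrow D^n\times[0,1])$) as $G/H$ ranges over $\mathcal{O}_G$, where $\mathcal{O}_G(-,G/H)$ denotes the represented diagram. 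On $\etop$ I would use the model structure transferred along the family $\{G/H\times-\dashv\emap(G/H,-)\}$ above, so that $f$ is a weak equivalence or a fibration exactly when each $f^H=\emap(G/H,f)$ is; in particular every $G$-space is fibrant.

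The Quillen adjunction and the first half of the criterion are formal. The right adjoint $\emap(\bullet,-)$ takes $f$ to the objectwise family $(f^H)_H$, so it carries (acyclic) fibrations of $\etop$ to (acyclic) fibrations of the projective structure; hence it is right Quillen and $eval_{G/e}$ is left Quillen. The same description shows $\emap(\bullet,-)$ in fact \emph{detects} all weak equivalences, so the hypothesis that the right adjoint reflects weak equivalences between fibrant objects holds automatically; and since $eval_{G/e}\Phi$ is already fibrant, the derived unit needs no fibrant replacement.

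It remains to prove that the unit $\Phi\to\emap(\bullet,eval_{G/e}\Phi)$ is a weak equivalence for every cofibrant $\Phi$, and I would argue by induction over a cellular presentation of $\Phi$. On a represented diagram a direct computation identifies $eval_{G/e}\,\mathcal{O}_G(-,G/H)$ with the $G$-space $G/H$, whence $\emap(\bullet,G/H)$ sends $G/K$ to $\emap(G/K,G/H)=\mathcal{O}_G(G/K,G/H)$ and the unit is an isomorphism on $\mathcal{O}_G(-,G/H)$, and likewise on $\mathcal{O}_G(-,G/H)\times A$ for any space $A$. Being a left adjoint, $eval_{G/e}$ preserves all colimits and sends each generating cofibration to the relative $G$-CW inclusion $G/H\times(S^{n-1}\hookrightarrow D^n)$, so it carries a cell complex to a $G$-CW complex built from the corresponding free equivariant cells. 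Since each fixed-point functor $(-)^K$ commutes with pushouts along relative $G$-CW inclusions and with their transfinite composites, the functor $\emap(\bullet,eval_{G/e}-)$ preserves the very colimits used to present $\Phi$, while the identity functor does so trivially. Running the unit up this tower, an induction from the representable case shows it is an isomorphism on every cell complex, hence --- as a retract of an isomorphism --- on every cofibrant $\Phi$; in particular it is a weak equivalence, which finishes the proof.

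The step I expect to be the real obstacle is the commutation fact invoked above: that $eval_{G/e}$ sends projective cofibrations to $G$-cofibrations assembled cellularly from free cells $G/H\times D^n$, and that each $(-)^K$ commutes with pushouts along such maps and with their transfinite composites. This is where the combinatorics of equivariant cell complexes must be used --- the discreteness of the fixed sets $(G/H)^K$ and the compatibility of attaching free cells with passage to $K$-fixed points --- whereas everything else is bookkeeping with the adjunction and with cofibrantly generated model categories.
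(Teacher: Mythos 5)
The paper does not actually prove this statement---it is quoted from \cite{piacenza} as background---so there is no in-paper proof to compare against; your argument is correct and is essentially the standard one, and it is also the same template the paper later follows for its isovariant analogue (Theorem \ref{mainthm}): formal Quillen adjunction, identification of the unit on representables via Yoneda, cell induction using that the composite of the right adjoint with the left adjoint preserves the colimits building cofibrant diagrams, and the reflection-of-weak-equivalences criterion. The one remark worth making is that the step you flag as the ``real obstacle'' is genuinely easy in the equivariant case---for finite $G$ the fixed sets $(G/H)^K$ are clopen, so $(-)^K$ commutes on the nose with pushouts of free cells and their transfinite composites and the unit is an isomorphism on cell complexes---whereas in the paper's isovariant setting the corresponding cellularity statement (Lemma \ref{linkscellular}) holds only up to homotopy and is the nonformal heart of the argument.
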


The right adjoint is the functor taking a $G$-space $X$ to the diagram $X^{(-)}:\mathcal{O}_G^{op} \to \mathsf{Top}$ which sends $G/H \mapsto X^H\cong\emap(G/H,X)$. The left adjoint is evaluation of the diagram at the orbit $G/e$. The projective model structure on the category of $\mathcal{O}_G$-diagrams is briefly described in section \ref{elmsection}, but more details can be found in \cite{piacenza}.

In the category $\itop$ of $G$-spaces with isovariant maps, the morphisms induce maps on the stratification of the spaces by isotropy group. To capture this extra structure, the replacement of the orbit category will have as objects chains of subgroups of $G$. We call this category the \emph{link orbit category}, $\mathcal{L}_G$ (see Definition 1.1). Since isovariant maps preserve stratification by isotropy, the morphisms in the link orbit category preserve chains of subgroups and induce equivariant maps on the corresponding orbits. 

For each chain $H_0< \cdots< H_n$ of subgroups of the finite group $G$, we define a \emph{linking simplex} $\Delta_G^{H_1< \cdots < H_n}$ as a quotient of $G \times \Delta^n$ manifesting the subgroups $H_i$ as isotropy groups. We show $\Delta_G^\bullet$ defines a functor from the link orbit category to $\itop$. The linking simplices fit into a family of adjunctions between the categories of spaces $\topp$ and isovariant $G$-spaces $\itop$. 
\[
\{\Delta_G^{H_0< \cdots < H_n} \times -: \xymatrix{\mathsf{Top} \ar@<1ex>[r] & \itop \ar@<1ex>[l] } : \imap(\Delta^{H_0< \cdots < H_n}_G, -)\}_{H_0< \cdots < H_n \in \mathcal{L}_G}
\]

We use Kan's transfer principle to transfer the cofibrantly generated model structure on $\topp$ across the adjunctions, which endows $\itop$ with a model structure, but to be a model category requires $\itop$ to have all small limits and colimits. 
While the category $\itop$ does not have a terminal object, we may assign it a formal terminal object, completing the category to a model category $\itop^\term$. A homotopy terminal object for $\itop$ can be constructed as the image under the derived left adjoint $\Delta^\bullet_G \otimes_{\mathcal{L}_G} -$ of the constant diagram on a point, and we assume our terminal object has the same isovariant homotopy type.

Finally, using the projective model structure on the category of diagrams $\mathsf{Fun}(\mathcal{L}_G^{op}, \mathsf{Top})$ and the transferred model structure on $\itop$ extended to $\itop^\term$, we prove the isovariant analogue of Elmendorf's theorem. 
A nonformal ingredient of the proof is Lemma \ref{linkscellular}, which states that the linking simplex functors $\imap(\Delta_G^\bullet,-)$ are cellular in the sense that they preserve sequential homotopy colimits and homotopy pushouts up to homotopy. We prove this using semicontinuity arguments and careful deformations of simplices in the cylinder coordinates of the homotopy colimits.

The paper is organized as follows. In section \ref{links}, we define the link orbit category $\mathcal{L}_G$ and the linking simplices $\Delta^\bullet_G$. 
In section \ref{modelstr}, we prove that $\itop$ has a cofibrantly generated model structure transferred across adjunctions with $\topp$ and extend this structure to $\itop^\term$. Finally, in section \ref{elmsection}, we prove the main theorem, an isovariant version of Elmendorf's theorem.

\vspace{.15in}

\emph{Acknowledgements.} This project was inspired by conversations at a Women in Topology workshop at the Mathematical Sciences Research Institute. I would like to thank Kate Ponto, Cary Malkiewich, and Inbar Klang for numerous illuminating discussions on isovariance. 
I would also like to thank the referee whose detailed report significantly improved this paper.

\vspace{.15in}

\section{The link orbit category and linking simplices} \label{links}

We motivate the definition of the link orbit category for a finite group $G$ by considering how to modify $G$-CW complexes to the isovariant category. 

In the standard model structure on $\etop$, the cofibrant objects are retracts of $G$-CW complexes, which are built by attaching entire orbits at a time. That is, a cell is $G/H \times D^n$ with a trivial action on $D^n$, and the cell is equivariantly glued along its boundary orbit $G/H \times S^{n-1}$. For example, as a $C_2$-CW complex, the flip disk $D^2$ in Example \ref{exD} is built from two fixed 0-cells $G/G \times D^0$ denoted $x,y$, one fixed 1-cell $G/G \times D^1$ denoted $z$ glued from $y$ to $x$, one free 1-cell $G/e \times D^1$ denoted $w$ glued from $x$ to $y$, and one free 2-cell $G/e \times D^2$ glued along its boundary to $w$ followed by $z$. 

\begin{center}
\begin{tikzpicture}[scale=.65]
	\filldraw[fill=black!20] (0,0) circle (2cm);
	\filldraw[blue] (0,2) circle (1.3pt);
	\node[above, blue] at (0,2) {$x$};
	\node at (-1,0) {$e \times D^2$};
	\node[left] at (-1.8,.8) {$e \times w$};
	\node[right] at (1.85,.8) {$\tau \times w$};
	\node at (1,0) {$\tau \times D^2$};
	\node[right, blue] at (0,1) {$z$};
	\filldraw[blue] (0,-2) circle (1.3pt);
	\node[below, blue] at (0,-2) {$y$};
	\draw[blue] (0,2)--(0,-2);
\end{tikzpicture}
\end{center}

The construction of this $C_2$-CW complex does not transfer to the category $\itop$, because the attaching maps are not isovariant; for example, attaching the free 1-cell $w$ to the fixed points uses a non-isovariant map $G/e \times S^0 \to \{x,y\}$. Building isovariant cell complexes will involve introducing new cells which interpolate between the different strata of fixed points. 

We define \emph{linking simplices} $\Delta^{H_1< \cdots <H_n}_G$ that play the role of orbits in the isovariant category. Then the \emph{linking cells} will be $\Delta^{H_1< \cdots <H_n}_G \times D^n$, in analogy with the cells $G/H \times D^n$ in the equivariant category. We use the word link because the 1-dimensional building block was called a link in Quinn's work on homotopically stratified spaces \cite{quinn}. The linking simplices can also be described in terms of the nondegenerate simplices of the \emph{exit path category} of Lurie \cite[A.6]{lurieHA}.


Let $\Delta^n$ be the standard $n$-simplex in $\mathsf{Top}$, that is, \[\Delta^n=\left\{ (t_0, \dots, t_n) \in [0,1]^{n+1} : \sum_{i=0}^n t_i = 1\right\}.\] 

We define the 1-dimensional linking simplex to be $\Delta^{e<G}= G \times \Delta^1  / G \times \{(1,0)\}$. The case $G=C_2$ is pictured.

\begin{center}
\begin{tikzpicture}[scale=.8]
	\draw[thick] (-2,0)--(2,0);
	\filldraw[blue] (0,0) circle (1.3pt);
	 \node[below] at (0,0) {\small $(1,0)$};
	 \node [below] at (-2,0) {\small $e \times (0,1)$};
	 \node[below] at (2,0) {\small $\tau \times (0,1)$};
	\draw[<->] (-1,.3) to [out=45,in=135] (1,.3);
	\filldraw (-2,0) circle (1.3pt);
	\filldraw (2,0) circle (1.3pt);
\end{tikzpicture}
\end{center}

For subgroups $H\leq G$, the orbit $G/H$ can be interpreted as a 0-dimensional link simplex $\Delta^H=G/H \times \Delta^0$. Then the flip disk $D^2$ can be built as an isovariant cell complex with 
two link 0-cells $\Delta^{e<G} \times D^0$ (labeled $m_1,m_2$) and one link 1-cell $\Delta^{e<G} \times D^1$ glued to $m_1, m_2$ as pictured.

\begin{center}
\begin{tikzpicture}[scale=.65]
	\filldraw[fill=black!20] (0,0) circle (2cm);
	\draw[magenta, thick] (1.73,1) arc (30:150:2cm);
	\draw[magenta, thick] (-1.73,-1) arc (210:330:2cm);
	\filldraw[blue] (0,2) circle (1.3pt);
	\filldraw[black] (-1.73,1) circle (1.3pt);
	\filldraw[black] (1.73,1) circle (1.3pt);
	\filldraw[black] (-1.73,-1) circle (1.3pt);
	\filldraw[black] (1.73,-1) circle (1.3pt);
	\node[left,magenta] at (-1,1.8) {$m_1$};
	\node[right,magenta] at (1,-1.8) {$m_2$};
	\filldraw[blue] (0,-2) circle (1.3pt);
	\draw[blue] (0,2)--(0,-2);
\end{tikzpicture}
\end{center}

For finite groups with longer chains of subgroups, we use higher-dimensional link simplices to keep track of how all subgroups in a chain interact with each other. For example, for $e<H<G$, the fundamental domain of the space $\Delta^{e<H<G}$ is $\Delta^2$ with $(1,0,0)$ fixed by $G$, the edge from $(1,0,0)$ to $(0,1,0)$ fixed by $H$, and the rest of the simplex fixed only by $e$.

\begin{center}
\begin{tikzpicture}scale=.7]
	\filldraw[fill=black!20]  (1,1) -- (0,0) -- (2,0)--(1,1);
	\draw[magenta,thick] (0,0) -- (2,0);
	\filldraw (1,1) circle (1.5pt) node[above]{\small $(0,0,1)$}; 
	\node[above right] at (1.5,.5) {fixed by $e$};
	\filldraw[magenta] (2,0) circle (1.5pt) node[below left]{fixed by $H$};
	\filldraw[blue] (0,0) circle (1.5pt)node [below left]{fixed by $G$};
	\node[above left] at (0,0) {\small $(1,0,0)$};
	\node[above right] at (2,0) {\small $(0,1,0)$};
\end{tikzpicture}
\end{center}

The linking simplices are naturally indexed on the link orbit category.

\begin{defn}
	The \emph{link orbit category}, $\mathcal{L}_G$, can be constructed in two stages. Let $[n]=\{0, 1, \dots ,n\}$, and let $\widetilde{\mathcal{L}}_G$ be the small category with objects given by strictly increasing chains of subgroups of $G$, $\mathbf{H}=H_0 < \cdots < H_n$. Let the morphisms $\widetilde{\mathcal{L}}_G(H_0< \cdots< H_n, K_0 < \cdots <K_m)$ be pairs $(\iota, \gamma)$, where $\iota:[n] \to [m]$ is an order-preserving inclusion and $\gamma \in G$ is an element such that $H_j=\gamma K_{\iota(j)} \gamma^{-1}$ for all $j \in [n]$. That is, $\gamma$ induces a map $R_\gamma: G/H_j \to G/K_{\iota(j)}: gH_j \mapsto g\gamma K_{\iota(j)}$ for all $j$. Composition in $\widetilde{\mathcal{L}}_G$ is defined by $(\iota, \gamma) \circ (\iota', \gamma')=(\iota \circ \iota', \gamma' \gamma)$. Then the link orbit category $\mathcal{L}_G$ is the quotient of $\widetilde{\mathcal{L}}_G$ in which two parallel morphisms $(\iota, \gamma),(\iota', \gamma'):\mathbf{H} \to \mathbf{K}$ are identified if $\iota=\iota'$ and $R_\gamma=R_{\gamma'}:G/H_j \to G/K_{\iota(j)}=G/K_{\iota'(j)}$ for all $j$. We denote the morphisms in $\mathcal{L}_G$ by $(\iota, \overline{\gamma})$, and note that $\overline{\gamma} \in \cap_{j\in [n]} C(H_j,K_{\iota(j)})/K_{\iota(0)}$, where $C(H_j, K_{\iota(j)})=\{\gamma \in G : \gamma^{-1}H_j\gamma=K_{\iota(j)}\}$. That is, $\overline{\gamma}$ is a class of elements of $G$ which simultaneously conjugate $H_j$ to $K_{\iota(j)}$ for all $j \in [n]$.
\end{defn}



While the link orbit category $\mathcal{L}_G$ contains the objects of the ordinary orbit category $\cO_G$ as the length zero chains, $\mathcal{L}_G$ is missing all non-self-maps between them. For example, for $G=C_2$, the orbit category $\mathcal{O}_{C_2}$ has a map $G/e \to G/G$, but the link orbit category $\mathcal{L}_{C_2}$ does not have any map $e \to G$ because $e$ is not a subchain of $G$. This is manifesting the fact that there is no isovariant map $\Delta^e \to \Delta^G$.

\begin{defn}
The \emph{linking simplex} $\Delta^{\mathbf{H}}_G$ for a chain $\mathbf{H}=H_0< \cdots < H_n$ is the quotient  
\[\Delta^{H_0< \cdots < H_n}_G=(G \times \Delta^n)/\sim \] where $(g,x)\sim(g',x)$ if and only if $gH_{k}=g'H_{k}$, when $x=(t_0, \dots, t_{n-k}, 0, \dots, 0)$, $0 \leq k \leq n$. Let $G \times \Delta^n \to \Delta^{H_0< \cdots< H_n}_G$ be the natural projection and denote the image of $(g,x)\in G \times \Delta^n$ under the projection by $\langle g,x\rangle$. The space $\Delta^{\mathbf{H}}_G$ has a left $G$-action given by $g' \cdot \langle g,x \rangle=\langle g'g,x \rangle$; points of the form $\langle g,(t_0, \dots, t_{n-k},0, \dots,0)\rangle$ where $t_{n-k}\neq 0$ are fixed by $gH_kg^{-1}$ under the $G$-action. 
\end{defn}

We note that $\Delta^{H_0< \cdots < H_k}_G$ is the same as the ``equivariant simplex'' $\Delta_k(G;H_k, \dots, H_0)$ defined in \cite{illmanCW}, although in the equivariant simplex, subgroups may be repeated. Illman shows that the equivariant simplex is a compact Hausdorff space with orbit space $\Delta^k$.

For the length 0 chain objects $H_0$ of $\mathcal{L}_G$, $\Delta^{H_0}_G \cong G/H_0$. The fundamental domain of the $G$-space $\Delta^{e<G}_G = G \times \Delta^1/G \times \{(1,0)\}$ is homeomorphic to an interval with 0 fixed. The category $\mathcal{L}_G$ captures higher dimensional links for the stratification by isotropy, and this extends to a functor $\Delta^\bullet_G:\mathcal{L}_G \to \itop$. We describe the maps $\Delta^{(\iota,\overline{\gamma})}$ now.

For an order-preserving inclusion $\iota:[n]\to [m]$, we define a map $\iota_\ast:\Delta^n \to \Delta^m$ which sends $x=(t_0, \dots, t_n)$ to the $m+1$-tuple $(s_0, \dots, s_m)$ defined by 
\[
s_{m-j}=\sum_{\iota(k)=j} t_{n-k}.
\]

\begin{rem}
	This is not the usual induced map on standard simplices. For example, for the ordered inclusion $\iota: \{0,1,2\}\mapsto\{\hat{0}, 1, 2, \hat{3}, 4 \}$, the map $\iota_\ast$ is defined by $\iota_\ast(t_0, t_1, t_2)=(t_0,0,t_1, t_2, 0)$.
\end{rem} 

Note that $\iota_\ast$ sends the vertex $(0, \dots, 0, 1, 0, \dots, 0)$ with a 1 in the $n-k$th spot to the vertex with a 1 in the $m-\iota(k)$th spot. Similarly, $\iota_\ast(t_0, \dots, t_{n-k}, 0, \dots, 0)$ has zeros above the $m-\iota(k)$th spot.


Then we define the map $\Delta^{(\iota,\overline{\gamma})}:\Delta^{\mathbf{H}}\to \Delta^{\mathbf{K}}$ which takes $\langle g,x \rangle \in \Delta^\mathbf{H}$ to $\langle g\gamma, \iota_\ast(x)\rangle \in \Delta^{\mathbf{K}}$. If $(g,x) \sim (g',x)$, then $gH_k=g'H_k$ and $x=(t_0, \dots, t_{n-k}, 0, \dots, 0)$, thus $\iota_\ast(x)$ has zeros above the $m-\iota(k)$ spot. For any representative $\gamma$ of $\overline{\gamma}$, we note that $g\gamma K_{\iota(k)}=g H_k\gamma=g' H_k\gamma=g'\gamma K_{\iota(k)}$, so the map $\Delta^{(\iota, \overline{\gamma})}$ is well-defined. Since $\Delta^{(\iota, \overline{\gamma})}$ commutes with the $G$-action and is injective, it is also isovariant. It is not hard to check the following.

\begin{prop}
	The linking simplices define a functor $\Delta_G^\bullet:\mathcal{L}_G \to \itop$.
\end{prop}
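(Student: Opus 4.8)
The plan is to verify the two functoriality axioms: preservation of identities and preservation of composition. For identities, I would observe that the identity morphism on a chain $\mathbf{H}=H_0<\cdots<H_n$ is the pair $(\mathrm{id}_{[n]},e)$, and by the explicit formula the map $\iota_\ast$ for $\iota=\mathrm{id}_{[n]}$ sends $(t_0,\dots,t_n)$ to the tuple $s_{n-j}=\sum_{k=j}t_{n-k}=t_{n-j}$, i.e.\ $\mathrm{id}_\ast=\mathrm{id}_{\Delta^n}$, so $\Delta^{(\mathrm{id},e)}(g,x)=(ge,x)=(g,x)$, the identity on $\Delta^{\mathbf{H}}_G$.

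For composition, suppose we have $\mathbf{H}\xrightarrow{(\iota,\gamma)}\mathbf{K}\xrightarrow{(\iota',\gamma')}\mathbf{J}$ in $\mathcal{L}_G$. The composite in $\mathcal{L}_G$ is $(\iota'\circ\iota,\gamma\gamma')$ (using the composition rule recorded in the definition, where the group labels multiply in the order $\gamma\gamma'$). I then need to check two things independently, since $\Delta^{(\iota,\gamma)}(g,x)=(g\gamma,\iota_\ast x)$ splits into a group part and a simplex part. First, on the group coordinate: $\Delta^{(\iota',\gamma')}\circ\Delta^{(\iota,\gamma)}$ sends $(g,x)\mapsto(g\gamma,\iota_\ast x)\mapsto(g\gamma\gamma',\iota'_\ast\iota_\ast x)$, and $g\gamma\gamma'=g(\gamma\gamma')$ matches the label of the composite. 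Second, and this is the only genuinely computational point, I need $\iota'_\ast\circ\iota_\ast=(\iota'\circ\iota)_\ast$ as maps $\Delta^n\to\Delta^\ell$. This follows from the summation formula: writing $\mu=\iota'\circ\iota$, the coordinate of $(\iota'_\ast\iota_\ast x)$ in slot $\ell-j$ is obtained by first distributing each $t_{n-k}$ into slot $m-\iota(k)$ of the intermediate tuple and then re-summing those into slot $\ell-\iota'(\iota(k))$; since $\iota'$ is injective the indices don't collide, and one gets exactly $s_{\ell-j}=\sum_{\mu(k)=j}t_{n-k}$, which is the defining formula for $\mu_\ast$. I should also note in passing that $\Delta^{(\iota,\gamma)}$ is well-defined (respects $\sim$), is $G$-equivariant, and is injective hence isovariant — but all of this is already established in the paragraph preceding the proposition, so I may simply cite it.

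The main obstacle, such as it is, is purely bookkeeping: keeping straight the reversal convention in the indexing (the subscript $n-k$ versus $m-\iota(k)$) so that the chain-from-the-top matching $K_{\iota(j)}=H_j$ is compatible with the simplex map, and confirming that the non-standard face map $\iota_\ast$ of the earlier remark is genuinely functorial in $\iota$ despite not being the usual simplicial coface. Once the identity $\iota'_\ast\iota_\ast=(\iota'\iota)_\ast$ is checked, there is nothing further to do. I expect the whole argument to be a few lines — consistent with the paper's phrasing ``it is not hard to check the following.''
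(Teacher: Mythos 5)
Your proposal is correct and matches what the paper intends: the paper gives no written proof beyond the remark that ``it is not hard to check,'' having already established well-definedness and isovariance of $\Delta^{(\iota,\gamma)}$ in the preceding paragraph, so the content is exactly the routine verification you carry out. Your computation of $\iota'_\ast\circ\iota_\ast=(\iota'\circ\iota)_\ast$ under the reversed indexing, and the match of the group labels $g\gamma\gamma'$ with the composition convention $\gamma\gamma'$, is the right (and complete) check.
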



\begin{ex}
	The linking simplices $\Delta^{\bullet}_{C_4}$ and their (non-self) maps are pictured below.

	\vspace{.1in}

	\newsavebox{\mybox}
\sbox{\mybox}{%
   \begin{tikzpicture}
     \filldraw[fill=black!20] (-.5,-.866) -- (.5,.866)--(1,0) --(.5,-.866)--(-.5,.866)--(-1,0)--(-.5,-.866);
		\draw[blue] (-1,0)--(1,0);
		\filldraw[magenta] (0,0) circle (1.3pt);
		\filldraw[blue] (-1,0) circle (1.2pt);
		\filldraw[blue] (1,0) circle (1.2pt);
		\filldraw (.5,.866) circle (1pt);
		\filldraw (-.5,.866) circle (1pt);
		\filldraw (-.5,-.866) circle (1pt);
		\filldraw (.5,-.866) circle (1pt);
		\node at (0,-1.3) {$e<C_2<C_4$};
   \end{tikzpicture}
 }

 \newsavebox{\myboxb}
\sbox{\myboxb}{%
   \begin{tikzpicture}
     \draw (-.5,-.866) -- (.5,.866);
		\draw (.5,-.866)--(-.5,.866);
		\filldraw[magenta] (0,0) circle (1.3pt);
		\filldraw (.5,.866) circle (1pt);
		\filldraw (-.5,.866) circle (1pt);
		\filldraw (-.5,-.866) circle (1pt);
		\filldraw (.5,-.866) circle (1pt);
		\node at (0,-1.3) {$e<C_4$};
   \end{tikzpicture}
 }
 \newsavebox{\myboxc}
\sbox{\myboxc}{%
   \begin{tikzpicture}
      \draw (.5,.866)--(1,0) --(.5,-.866);
		\draw (-.5,.866)--(-1,0)--(-.5,-.866);
		\filldraw[blue] (-1,0) circle (1.2pt);
		\filldraw[blue] (1,0) circle (1.2pt);
		\filldraw (.5,.866) circle (1pt);
		\filldraw (-.5,.866) circle (1pt);
		\filldraw (-.5,-.866) circle (1pt);
		\filldraw (.5,-.866) circle (1pt);
		\node at (0,-1.3) {$e<C_2$};
   \end{tikzpicture}
 }
 \newsavebox{\myboxd}
\sbox{\myboxd}{%
   \begin{tikzpicture}
      \draw[white] (-.5,-.866) -- (.5,.866)--(1,0) --(.5,-.866)--(-.5,.866)--(-1,0)--(-.5,-.866);
		\draw[blue] (-1,0)--(1,0);
		\filldraw[magenta] (0,0) circle (1.3pt);
		\filldraw[blue] (-1,0) circle (1.2pt);
		\filldraw[blue] (1,0) circle (1.2pt);
		\node at (0,-1.3) {$C_2<C_4$};
   \end{tikzpicture}
 }
  \newsavebox{\myboxe}
\sbox{\myboxe}{%
   \begin{tikzpicture}
      \filldraw (.5,.866) circle (1pt);
		\filldraw (-.5,.866) circle (1pt);
		\filldraw (-.5,-.866) circle (1pt);
		\filldraw (.5,-.866) circle (1pt);
		\node at (0,-1.3) {$e$};
   \end{tikzpicture}
 }

  \newsavebox{\myboxf}
\sbox{\myboxf}{%
   \begin{tikzpicture}
      \draw[white] (-.5,-.866) -- (.5,.866)--(1,0) --(.5,-.866)--(-.5,.866)--(-1,0)--(-.5,-.866);
		\filldraw[blue] (-1,0) circle (1.2pt);
		\filldraw[blue] (1,0) circle (1.2pt);
		\node at (0,-1.3) {$C_2$};
   \end{tikzpicture}
 }
  \newsavebox{\myboxg}
\sbox{\myboxg}{%
   \begin{tikzpicture}
      \draw[white] (-.5,-.866) -- (.5,.866)--(1,0) --(.5,-.866)--(-.5,.866)--(-1,0)--(-.5,-.866);
		\filldraw[magenta] (0,0) circle (1.3pt);
		\node at (0,-1.3) {$C_4$};
   \end{tikzpicture}
 }

\begin{center}
\begin{tikzpicture}
  \node at (0,0) {\usebox{\mybox}};
  \node at (0,3.3) {\usebox{\myboxb}};
  \draw[->] (-.05,2)--(-.05,1.5);
  \draw[->] (.15,2)--(.15,1.5);
  \draw[->] (.05,2)--(.05,1.5);
  \draw[->] (-.15,2)--(-.15,1.5);
  \node at (2.8,-1.5) {\usebox{\myboxc}};
  \draw[->] (1.6,-.6)--(1.1,-.3);
  \draw[->] (1.55,-.7)--(1.05,-.4);
  \draw[->] (1.65,-.5)--(1.15,-.2);
  \draw[->] (1.7,-.4)--(1.2,-.1);
  \node at (-2.8,-1.5) {\usebox{\myboxd}};
    \draw[->] (-1.55,-.7)--(-1.05,-.4);
  \draw[->] (-1.65,-.5)--(-1.15,-.2);
  \node at (2.9,2) {\usebox{\myboxe}};
  \draw[->] (1.6,2.55)--(1.1,2.9);
  \draw[->] (1.55,2.45)--(1.05,2.8);
  \draw[->] (1.65,2.65)--(1.15,3);
  \draw[->] (1.7,2.75)--(1.2,3.1);
  \draw[->] (2.85,.5)--(2.85,0);
  \draw[->] (3.05,.5)--(3.05,0);
  \draw[->] (2.95,.5)--(2.95,0);
  \draw[->] (2.75,.5)--(2.75,0);
  \node at (-2.9,2) {\usebox{\myboxg}};
  \draw[->] (-1.55,2.65)--(-1.05,3);
  \draw[->] (-2.85,.5)--(-2.85,0);
  \node at (0,-3.5) {\usebox{\myboxf}};
  \draw[->] (-1.05,-2.3)--(-1.55,-2);
  \draw[->] (-1.15,-2.5)--(-1.65,-2.2);
  \draw[->] (1.05,-2.3)--(1.55,-2);
  \draw[->] (1.15,-2.5)--(1.65,-2.2);
\end{tikzpicture}
\end{center}
\end{ex}



\vspace{.15in}

\section{A model structure on $\itop$} \label{modelstr}

In this section we will describe a cofibrantly generated model structure on $\itop$. For the general theory and recognition principles of cofibrantly generated model categories, one may consult \cite{hovey}. For the category $\mathsf{Top}$, there is a cofibrantly generated model structure with generating cofibrations given by $I=\{S^{n-1} \to D^n\}$ and generating acyclic cofibrations given by $J=\{D^n \times \{0\} \to D^n \times [0,1]\}$. We start by stating Kan's transfer principle (\cite[11.3.2]{hirsch}).

\begin{prop}
	(modified transfer principle \cite[A.1]{stephanelm}) Let $\cC$ be a cofibrantly generated model category with generating cofibrations $I$ and generating acyclic cofibrations $J$. Let $\sD$ be a category with all small limits and colimits. Suppose $\{F_\iota: \cC \rightleftarrows \sD :U_\iota\}$ is a set of adjunctions. If $FI=\cup_{\iota}\{F_\iota(f);f \in I\}$ and  $FJ=\cup_\iota \{F_\iota(f);f\in J\}$ permit the small object argument, and for all $\iota$ the functor $U_\iota$ sends relative $FJ$-cell complexes to weak equivalences, then $\sD$ is a cofibrantly generated model category with generating cofibrations $FI$, generating acyclic cofibrations $FJ$, and weak equivalences and fibrations defined by $U_\iota$.
\end{prop}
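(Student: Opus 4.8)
The plan is to reduce the statement to Kan's recognition theorem for cofibrantly generated model categories (as in \cite[2.1.19]{hovey}): one defines the candidate classes in $\sD$ and then checks the finitely many hypotheses of that theorem. Declare a map $p$ of $\sD$ to be a \emph{weak equivalence} (resp. a \emph{fibration}) when $U_\iota(p)$ is a weak equivalence (resp. a fibration) in $\cC$ for every $\iota$, and let the cofibrations be the maps with the left lifting property against all maps that are simultaneously fibrations and weak equivalences. Since $\sD$ is complete and cocomplete by hypothesis, and since each $U_\iota$ preserves retracts while the weak equivalences of $\cC$ satisfy two-out-of-three and are closed under retracts, the class of weak equivalences of $\sD$ inherits these closure properties immediately. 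What remains is to show that $FI$ and $FJ$ are generating sets of cofibrations and acyclic cofibrations for this structure.

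First I would record the adjunction bookkeeping. By the adjunction $(F_\iota, U_\iota)$, a map $p$ of $\sD$ lifts on the right against $F_\iota(s)$ for all $\iota$ and all $s$ in a set $\mathcal{S}$ if and only if $U_\iota(p)$ lifts on the right against $\mathcal{S}$ for every $\iota$. Taking $\mathcal{S}=I$ and using that $I$ generates the cofibrations of $\cC$ shows that $p$ is $FI$-injective exactly when each $U_\iota(p)$ is an acyclic fibration of $\cC$, i.e. exactly when $p$ is a fibration and a weak equivalence; taking $\mathcal{S}=J$ shows $p$ is $FJ$-injective exactly when $p$ is a fibration. The same argument run in the other direction shows $FI$ and $FJ$ consist of cofibrations of $\sD$: for $i\in I$ (or $j\in J$), $U_\iota$ carries an acyclic fibration of $\sD$ to an acyclic fibration of $\cC$, against which $i$ (resp. $j$, since $j$ is a cofibration of $\cC$) lifts, so $F_\iota(i)$ (resp. $F_\iota(j)$) lifts against every acyclic fibration of $\sD$. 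Consequently every relative $FJ$-cell complex is a cofibration, being a transfinite composite of pushouts of elements of $FJ$; and by the standing hypothesis each $U_\iota$ also carries such a cell complex to a weak equivalence. So relative $FJ$-cell complexes are acyclic cofibrations.

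Next I would produce the factorizations using the small object argument, which applies to $FI$ and to $FJ$ by hypothesis: every map of $\sD$ factors as a relative $FI$-cell complex (a cofibration) followed by an $FI$-injective (an acyclic fibration, by the identification above), and also as a relative $FJ$-cell complex (an acyclic cofibration) followed by an $FJ$-injective (a fibration). The lifting axiom pitting cofibrations against acyclic fibrations is then the definition of cofibration. For the remaining lifting axiom, given $f$ both a cofibration and a weak equivalence, I would factor $f = p \circ j$ with $j$ a relative $FJ$-cell complex and $p$ a fibration, note that $p$ is a weak equivalence by two-out-of-three and hence an acyclic fibration, lift $f$ against $p$, and use the retract argument to exhibit $f$ as a retract of $j$; since $j$ has the left lifting property against all fibrations, so does $f$. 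The same retract argument, applied now to a map with the left lifting property against all fibrations together with its $FJ$-factorization, shows such a map is a retract of a relative $FJ$-cell complex and hence an acyclic cofibration; combined with the previous step this identifies the acyclic cofibrations as precisely the maps lifting against all fibrations, which together with the factorizations confirms $FI$ and $FJ$ as generating sets. That completes the verification of Kan's recognition theorem.

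I expect essentially no serious obstacle, as the argument is formal once the inputs of Kan's theorem are in place; the single nonformal ingredient is supplied by hypothesis, namely that each $U_\iota$ sends relative $FJ$-cell complexes to weak equivalences (this "modified" form, following \cite[A.1]{stephanelm}, replaces the classical requirement of a functorial path object or fibrant replacement). The only places calling for a little care are the two retract arguments and the observation — easy but necessary — that relative $FJ$-cell complexes are automatically cofibrations in $\sD$, which rests on the fact that an acyclic fibration of $\sD$ restricts under each $U_\iota$ to an acyclic fibration of $\cC$.
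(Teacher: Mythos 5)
Your proof is correct: it is the standard verification of Kan's recognition theorem (Hovey 2.1.19) for a transferred structure, with the adjunction bookkeeping identifying $FI\text{-inj}$ with the acyclic fibrations and $FJ\text{-inj}$ with the fibrations, the small object argument supplying the factorizations, and the retract argument closing the lifting axioms; the only nonformal input, that each $U_\iota$ sends relative $FJ$-cell complexes to weak equivalences, is exactly the stated hypothesis, and you correctly handle the ``modified'' feature of having a whole family of adjunctions by quantifying over $\iota$ throughout. The paper itself gives no proof of this proposition --- it is quoted verbatim from the cited reference \cite[A.1]{stephanelm} --- so there is nothing internal to compare against; your argument is essentially the one found in that reference and in the single-adjunction case in \cite[11.3.2]{hirsch}.
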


We use this to transfer the cofibrantly generated model structure on $\mathsf{Top}$ to $\itop$. We note that $\sD$ is not required to be complete to transfer the model \emph{structure}.

\begin{thm} \label{modelstrisvt}
	There is a cofibrantly generated model structure on $\itop$ with generating cofibrations given by \[\{\Delta^{\mathbf{H}}_G \times S^{n-1} \to \Delta^{\mathbf{H}}_G \times D^n \}\] and generating acyclic cofibrations given by \[\{\Delta^{\mathbf{H}}_G \times D^n \times \{0\} \to \Delta^{\mathbf{H}}_G \times D^n \times [0,1]\},\] where $\mathbf{H}=H_0 < \cdots < H_k$ is an object of the link orbit category $\mathcal{L}_G$ and $n \in \NN$.
\end{thm}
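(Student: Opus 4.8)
The plan is to apply the modified transfer principle (Proposition, stated just above) to the set of adjunctions
\[
\{\Delta_G^{\mathbf{H}} \times -: \mathsf{Top} \rightleftarrows \itop : \imap(\Delta_G^{\mathbf{H}},-)\}_{\mathbf{H} \in \mathcal{L}_G}.
\]
The model structure on $\mathsf{Top}$ is cofibrantly generated with $I = \{S^{n-1}\to D^n\}$ and $J = \{D^n\times\{0\}\to D^n\times[0,1]\}$, so once the hypotheses of the transfer principle are verified, the conclusion gives exactly the stated generating (acyclic) cofibrations $FI$ and $FJ$. There are three things to check: that $\itop$ admits the relevant colimits so that $FI$ and $FJ$ permit the small object argument, that the domains $\Delta_G^{\mathbf{H}}\times S^{n-1}$ and $\Delta_G^{\mathbf{H}}\times D^n\times\{0\}$ are small relative to the respective cell complexes, and — the nonformal point — that each right adjoint $\imap(\Delta_G^{\mathbf{H}},-)$ sends relative $FJ$-cell complexes to weak equivalences of spaces.

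First I would address the set-theoretic bookkeeping. The category $\itop$ has all small colimits: a colimit of isovariant maps is computed on underlying $G$-spaces, and one checks the isotropy-preservation condition is inherited (isotropy of a point in a colimit is controlled by the isotropy of its preimages, and isovariance of the structure maps forces equality). In particular the relevant pushouts and transfinite compositions used in the small object argument exist. Smallness of the domains is standard: $\Delta_G^{\mathbf{H}}\times S^{n-1}$ and $\Delta_G^{\mathbf{H}}\times D^n\times\{0\}$ are compact Hausdorff, and as in the classical topological and equivariant cases (cf. \cite[2.4]{hovey}), compact objects are $\omega$-small relative to the subcategory of closed $T_1$ inclusions generated by $FI$ and $FJ$, so the small object argument applies. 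Note we do not need $\itop$ to be complete here — that deficiency is dealt with separately in Section \ref{elmsection} by adjoining a formal terminal object — since the transfer principle only requires cocompleteness for the model \emph{structure}.

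The heart of the argument is the third hypothesis. A relative $FJ$-cell complex is a transfinite composition of pushouts of coproducts of maps $\Delta_G^{\mathbf{K}}\times D^n\times\{0\}\to \Delta_G^{\mathbf{K}}\times D^n\times[0,1]$. I would show $\imap(\Delta_G^{\mathbf{H}},-)$ carries each such map to a weak equivalence — in fact to a (Hurewicz) deformation retract — in two stages. For a single generating acyclic cofibration, $\imap(\Delta_G^{\mathbf{H}}, \Delta_G^{\mathbf{K}}\times D^n\times\{0\})\to \imap(\Delta_G^{\mathbf{H}}, \Delta_G^{\mathbf{K}}\times D^n\times[0,1])$ is induced by the deformation retraction of $[0,1]$ onto $\{0\}$, which is isovariant-preserving since the interval factor is acted on trivially; mapping spaces out of a fixed object preserve such retractions, so this is a deformation retract of spaces. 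For a general relative $FJ$-cell complex $X\to Y$ one argues transfinitely: one shows the pushout along a coproduct of generating acyclic cofibrations is sent by $\imap(\Delta_G^{\mathbf{H}},-)$ to a weak equivalence, and that these assemble under transfinite composition. The key input making the single-cell step extend to pushouts is that $\imap(\Delta_G^{\mathbf{H}},-)$ behaves well with respect to the pushouts along $FJ$-maps that occur — here one uses that attaching a cell $\Delta_G^{\mathbf{K}}\times D^n\times[0,1]$ along $\Delta_G^{\mathbf{K}}\times D^n\times\{0\}$ is, on mapping spaces out of $\Delta_G^{\mathbf{H}}$, again an attachment along one end of a cylinder, so the inclusion $\imap(\Delta_G^{\mathbf{H}},X)\hookrightarrow \imap(\Delta_G^{\mathbf{H}},Y)$ is a deformation retract; composing retractions and passing to the (filtered) colimit — using that $\imap(\Delta_G^{\mathbf{H}},-)$ commutes with the relevant transfinite compositions by compactness of $\Delta_G^{\mathbf{H}}$ — gives that $\imap(\Delta_G^{\mathbf{H}}, X)\to\imap(\Delta_G^{\mathbf{H}},Y)$ is a weak equivalence.

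The main obstacle I anticipate is precisely this last point: controlling $\imap(\Delta_G^{\mathbf{H}}, -)$ on pushouts. Because isovariant maps must respect the isotropy stratification, the mapping space $\imap(\Delta_G^{\mathbf{H}},Y)$ does not decompose as simply as an equivariant mapping space, and a map out of $\Delta_G^{\mathbf{H}}$ into a pushout $Y = Y_0\cup_{\Delta_G^{\mathbf{K}}\times D^n\times\{0\}}(\Delta_G^{\mathbf{K}}\times D^n\times[0,1])$ can have image meeting both the old part and the new cylinder cell. One must check that such maps can be deformed compatibly — pushing the cylinder coordinate toward $\{0\}$ — while remaining isovariant, which is where a careful analysis of how the linking simplex $\Delta_G^{\mathbf{H}}$ can map into the strata of the cell is needed. (This is the same circle of ideas that Lemma \ref{linkscellular} handles in greater generality; for the present theorem only the much easier case of attaching along a trivially-acted cylinder end is required, so a direct deformation-retract argument suffices and we do not need the full strength of cellularity.) Once this is in hand, the transfer principle delivers the cofibrantly generated model structure with the asserted generators.
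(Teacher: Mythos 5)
Your proposal is correct and follows essentially the same route as the paper: transfer the cofibrantly generated structure on $\mathsf{Top}$ across the family of adjunctions $\{\Delta_G^{\mathbf{H}}\times - \dashv \imap(\Delta_G^{\mathbf{H}},-)\}$, using compactness of the domains for smallness and the (isovariant) deformation retraction collapsing the cylinder coordinate to show $\imap(\Delta_G^{\mathbf{H}},-)$ sends relative $FJ$-cell complexes to weak equivalences, with cocompleteness of $\itop$ checked via underlying colimits and the completeness issue deferred to the formal terminal object. Your treatment of the pushout and transfinite-composition steps is in fact somewhat more explicit than the paper's, which compresses them into the remarks that the pushout along such a weak equivalence is a weak equivalence and that maps out of the compact $\Delta_G^{\mathbf{H}}$ factor through a finite stage.
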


We prove the theorem by defining the necessary adjunctions and checking the conditions of Kan's transfer principle.

\begin{prop} \label{adjunction}
	For each object $\mathbf{H}=H_0< \cdots < H_n$ of $\mathcal{L}_G$, there is an adjunction
	\[
\Delta_G^{\mathbf{H}} \times -: \xymatrix{\mathsf{Top} \ar@<1ex>[r] & \itop \ar@<1ex>[l] } : \imap(\Delta^{\mathbf{H}}_G, -).
	\]
\end{prop}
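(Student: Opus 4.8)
The plan is to establish the adjunction by the standard tensor–hom argument, using that $\itop$ is enriched and tensored over $\topp$ via the subspace topology inherited from $\etop$. First I would make precise what the two functors are: the left adjoint $\Delta_G^{\mathbf{H}} \times -$ sends a space $T \in \topp$ to the $G$-space $\Delta_G^{\mathbf{H}} \times T$ with $G$ acting only on the first factor, and diagonal action; this is isovariant-friendly because the isotropy of $(z,t)$ equals the isotropy of $z$ in $\Delta_G^{\mathbf{H}}$ (since $T$ has trivial action), so morphisms out of it in $\itop$ make sense. On a map $\phi: T \to T'$ it sends $\mathrm{id} \times \phi$, which is isovariant since it is the identity on the $\Delta_G^{\mathbf{H}}$-coordinate. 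The right adjoint sends a $G$-space $X \in \itop$ to the mapping space $\imap(\Delta_G^{\mathbf{H}}, X)$, topologized as a subspace of $\emap(\Delta_G^{\mathbf{H}}, X) \subseteq \map(\Delta_G^{\mathbf{H}}, X)$; functoriality in $X$ is by post-composition, which clearly preserves isovariance of a map $\Delta_G^{\mathbf{H}} \to X$.

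The core step is to produce a natural bijection
\[
\imap\bigl(\Delta_G^{\mathbf{H}} \times T,\ X\bigr) \;\cong\; \map\bigl(T,\ \imap(\Delta_G^{\mathbf{H}}, X)\bigr)
\]
and then upgrade it to a homeomorphism of mapping spaces. I would start from the cartesian-closed adjunction in $\topp$ (or $\etop$): a continuous map $f: \Delta_G^{\mathbf{H}} \times T \to X$ corresponds to its adjunct $\tilde f: T \to \map(\Delta_G^{\mathbf{H}}, X)$, $t \mapsto f(-,t)$. The content is to check that $f$ is isovariant if and only if $\tilde f$ lands in $\imap(\Delta_G^{\mathbf{H}}, X)$. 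For the ``only if'' direction: if $f$ is $G$-equivariant then each $f(-,t)$ is $G$-equivariant (because $g\cdot(z,t) = (g\cdot z, t)$), and if $f$ preserves isotropy then $G_z = G_{(z,t)} = G_{f(z,t)} = G_{f(-,t)(z)}$, so $f(-,t)$ is isovariant for every $t$; hence $\tilde f$ factors through $\imap(\Delta_G^{\mathbf{H}}, X)$. Conversely, if $\tilde f(t) = f(-,t)$ is isovariant for all $t$, then running the isotropy computation backwards gives $G_{(z,t)} = G_z = G_{f(z,t)}$, and equivariance of $f$ follows from equivariance of each $f(-,t)$ together with triviality of the action on $T$. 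Continuity of $\tilde f$ into the subspace $\imap(\Delta_G^{\mathbf{H}}, X)$ is automatic once we know it factors set-theoretically through that subspace, since it is already continuous into the ambient mapping space. Naturality in both $T$ and $X$ is inherited verbatim from the naturality of the cartesian-closed adjunction in $\topp$.

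Finally I would note that the bijection is in fact a homeomorphism of the $\topp$-enriched hom-objects $\imap(\Delta_G^{\mathbf{H}} \times T, X) \cong \map(T, \imap(\Delta_G^{\mathbf{H}}, X))$: both are subspaces of the corresponding nonequivariant mapping spaces, the unenriched adjunction there is a homeomorphism, and it restricts to a continuous bijection on the subspaces whose inverse is the restriction of the (continuous) inverse; so it is a homeomorphism. This gives the $\topp$-enriched adjunction, which in particular yields the ordinary adjunction of the statement. The only mild subtlety — and the step I would be most careful about — is the interchange of the isotropy condition with the adjunction, i.e.\ verifying that "$f$ isovariant" is genuinely equivalent to "$\tilde f(t)$ isovariant for all $t$" and not merely implied in one direction; this hinges entirely on the action on $T$ being trivial, so that $G_{(z,t)} = G_z$ for every $t \in T$, which I would state explicitly. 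Everything else is formal from cartesian closedness of $\topp$.
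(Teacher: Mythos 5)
Your proof is correct and is essentially the same argument the paper has in mind: the paper dismisses this proposition with the single remark that it is ``an easy consequence of using the subspace topology for $\imap(X,Y)$,'' and your write-up simply supplies the details — the restriction of the cartesian-closed adjunction of $\topp$, the observation that triviality of the action on $T$ gives $G_{(z,t)}=G_z$ so that $f$ is isovariant iff each $f(-,t)$ is, and the check that the homeomorphism of mapping spaces restricts to the subspaces. The one point you flag as subtle is exactly the right one, and your treatment of it is sound.
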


This is an easy consequence of using the subspace topology for $\imap(X,Y)$.

Let $U_{\mathbf{H}}=\imap(\Delta^\mathbf{H}_G,-)$, and let $FI=\bigcup_{\mathbf{H} \in \mathcal{L}_G}\{\Delta^\mathbf{H}_G\times f: f \in I\}$ and $FJ=\bigcup_{\mathbf{H} \in \mathcal{L}_G}\{\Delta^\mathbf{H}_G\times f: f \in J\}$ where $I$ and $J$ are the generating cofibration sets for $\mathsf{Top}$. 

\begin{lemma}
	The domains of $FI$ (resp, $FJ$) are small with respect to relative $FI$ (resp, $FJ$)-cell complexes, and $U_{\mathbf{H}}$ takes relative $FJ$-cell complexes to weak equivalences for all objects $\mathbf{H}$ of $\mathcal{L}_G$.
\end{lemma}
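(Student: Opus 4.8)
The plan is to verify the two conditions of Kan's modified transfer principle separately. For the smallness claim, the domains of $FI$ are the spaces $\Delta^{\mathbf H}_G \times S^{n-1}$ and the domains of $FJ$ are the spaces $\Delta^{\mathbf H}_G \times D^n \times \{0\}$. Each linking simplex $\Delta^{\mathbf H}_G$ is a quotient of $G \times \Delta^n$ by a closed equivalence relation, hence a compact Hausdorff space, so every domain above is compact. First I would recall that in $\itop$ all (small) colimits that exist are computed by giving the underlying set the colimit topology from $\etop$, which in turn is the colimit in $\topp$ (the isovariance condition is checked pointwise and is preserved by the relevant filtered colimits and cell attachments). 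Then the standard argument that compact spaces are $\omega$-small (or even finite) with respect to closed $T_1$-inclusions — exactly as in \cite[2.4.2]{hovey} for $\topp$ — carries over verbatim, since relative $FI$- and $FJ$-cell complexes are built by attaching along closed inclusions and a compact set hitting such a colimit lands in a finite stage. This gives that $FI$ and $FJ$ permit the small object argument.

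The substantive condition is that each right adjoint $U_{\mathbf H} = \imap(\Delta^{\mathbf H}_G, -)$ sends relative $FJ$-cell complexes to weak equivalences. The key observation is that a generating acyclic cofibration $\Delta^{\mathbf K}_G \times D^n \times \{0\} \hookrightarrow \Delta^{\mathbf K}_G \times D^n \times [0,1]$ in $\itop$ is an \emph{isovariant deformation retract}: projection $\Delta^{\mathbf K}_G \times D^n \times [0,1] \to \Delta^{\mathbf K}_G \times D^n \times \{0\}$ is isovariant (the extra $[0,1]$-coordinate is acted on trivially and does not affect isotropy groups), and the straight-line homotopy in the $[0,1]$-coordinate is through isovariant maps. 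I would then show this property is preserved by cobase change along an arbitrary isovariant map and by transfinite composition: a pushout of an isovariant deformation retract inclusion is again one, because the retraction and the homotopy are defined by the universal property on the pushout (the homotopy is constant on the glued-in part, so it descends), and a transfinite composite of such inclusions is again an isovariant deformation retract inclusion by assembling the homotopies stagewise (reparametrizing as in the usual $\topp$ argument, or invoking that the inclusion is a closed cofibration so the "infinite concatenation" homotopy is continuous). Hence every relative $FJ$-cell complex $A \to X$ is an isovariant deformation retract inclusion. Finally, applying the continuous functor $\imap(\Delta^{\mathbf H}_G, -)$ preserves the deformation-retract data: $\imap(\Delta^{\mathbf H}_G, A) \to \imap(\Delta^{\mathbf H}_G, X)$ is a deformation retract inclusion of topological spaces, in particular a homotopy equivalence, hence a weak equivalence. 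Here one uses that $\imap(\Delta^{\mathbf H}_G, -)$ takes isovariant homotopies to homotopies, which follows from the adjunction $\imap(\Delta^{\mathbf H}_G, -) \cong \imap(\Delta^{\mathbf H}_G, -)$ being enriched over $\topp$ and the fact that $\Delta^{\mathbf H}_G \times [0,1]$ computes the cylinder in $\itop$ (again the trivial $[0,1]$-coordinate does not disturb isotropy).

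I would structure the write-up as two lemmas: (i) compactness of the domains plus "colimits in $\itop$ are underlying-space colimits" gives smallness; (ii) relative $FJ$-cell complexes are isovariant deformation retract inclusions and $\imap(\Delta^{\mathbf H}_G, -)$ preserves such. The main obstacle I anticipate is the bookkeeping in step (ii): one must be careful that (a) colimits in the incomplete category $\itop$ genuinely behave like the underlying topological colimits for the cell attachments in question, and especially (b) the transfinite-composition step, where concatenating infinitely many deformation-retract homotopies requires either a reparametrization argument or the observation that each inclusion in the tower is a closed (Hurewicz) cofibration so that the telescoped homotopy is continuous. A secondary subtlety is confirming that $\imap(\Delta^{\mathbf H}_G, -)$ really does convert the isovariant homotopy $\Delta^{\mathbf K}_G \times D^n \times [0,1] \times I \to \Delta^{\mathbf K}_G \times D^n \times [0,1]$ into an honest homotopy of mapping spaces; this is where the space-enrichment of $\itop$ via the subspace topology (noted just before the statement) is used, and it is routine but worth stating explicitly.
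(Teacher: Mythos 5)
Your proposal is correct and follows essentially the same route as the paper: compactness of the domains gives smallness, and the second condition is verified by observing that the generating acyclic cofibrations are isovariant deformation retract inclusions whose good behavior propagates through cobase change and (via compactness of $\Delta^{\mathbf{H}}_G$, or your stagewise concatenation) transfinite composition. Your treatment of the cobase-change and transfinite-composition steps is more explicit than the paper's rather terse version, but the underlying argument is the same.
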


\begin{proof}
	The domains $\Delta^\mathbf{H}_G \times S^{n-1}$ and $\Delta^\mathbf{H}_G \times D^n$ are compact, and thus small with respect to relative cell complexes. 

	To show that $\imap(\Delta^\mathbf{H}_G,-)$ takes relative $FJ$-cell complexes to weak equivalences, 
	we first note that the map $\Delta^\mathbf{K}_G \times D^n \times [0,1] \to \Delta^\mathbf{K}_G \times D^n$ given by collapsing $[0,1]$ to $\{0\}$ yields an isovariant deformation retraction. 
 Pushouts preserve deformation retractions. Thus the maps in the sequential colimit defining a relative $FJ$-cell complex are isovariant homotopy equivalences.

 Given an isovariant homotopy $K:W \times [0,1] \to Z$, one can construct a homotopy $\imap(\Delta^\mathbf{H}_G,W) \times [0,1] \to \imap(\Delta^{\mathbf{H}},Z)$ which sends $(\lambda,t)$ to $K_t \circ \lambda$ for all $\mathbf{H}$. This implies that an isovariant homotopy equivalence $f:X \to Y$ has the property that $\imap(\Delta^\mathbf{H}_G,f)$ is a homotopy equivalence of spaces for all $\mathbf{H}$, and thus also a weak equivalence of spaces. Then the maps in the sequential colimit defining a relative $FJ$-cell complex are weak equivalences.

Finally, a map from $\Delta^\mathbf{H}_G$ to the sequential colimit $A \to \col_n X_n$ defining a cell complex will factor through a finite stage because the domain is compact and the quotients $X_n/A$ are $T_1$ (that is, points are closed). This holds because the maps $X_i \to X_{i+1}$ are pushouts along maps of $FJ$, which have the property that successive quotients are $T_1$.  
\end{proof}

Thus in $\itop$, 
a map $f:X \to Y$ is an isovariant weak equivalence (respectively, fibration) if $\imap(\Delta^\mathbf{H}_G,X) \to \imap(\Delta^\mathbf{H}_G,Y)$ is a weak equivalence (resp., fibration) for all $\mathbf{H} \in \mathcal{L}_G$.

\begin{ex}
	Consider the $C_2$-isovariant map $f:\ast \to D^2$ from Example \ref{exD}. This is an equivariant weak equivalence because the induced maps $f^e,f^G$ are both weak equivalences of spaces. But $f$ is not an isovariant weak equivalence because $\imap(\Delta^e_G,\ast)$ is empty while $\imap(\Delta^e_G, D^2)$ is not.
\end{ex}
 
\begin{rem} 
Since $\Delta^{H_0}_G \cong G/H_0$, the space $\imap(\Delta^{H_0}_G,X)$ is homeomorphic to $X_{H_0}$, the points of $X$ which are fixed by $H_0$ but no larger subgroup. That is, $\imap(\Delta^e_G,X)$ consists of free points $X \setminus \cup_{H \neq e}  X^H$, and $\imap(\Delta^G_G,X)=X^G$. The space $\imap(\Delta^{e <G}_G,X)$ is equivalent to a free neighborhood of the fixed points. 
\end{rem}

Finally, we check whether $\itop$ has all small limits and colimits. Colimits in $\itop$ can be constructed as the underlying colimit in $\etop$.

\begin{rem} \label{referee}
	The referee has pointed out that the corresponding category of compactly generated weak Hausdorff spaces with $G$-action would not have certain colimits. For example, consider the pushout of the following corner in $\itop$ with $G=C_2$, where $i$ is inclusion and the generator of $C_2$ acts on everything by multiplication by $-1$.
	\[
\xymatrix{(-\infty,0)\cup(0,\infty) \ar[r]^(.73)i \ar[d]_{\frac{x}{|x|}} & \mathbb{R} \\ \{\pm 1\} }
	\]
	The pushout in ordinary spaces is a three-point space which is not Hausdorff. After weak Hausdorffification, the pushout is a single point with trivial $C_2$-action, but there are no isovariant maps from any of the spaces in the diagram to a trivial point. 
\end{rem}

On the other hand, the underlying product $X \times Y$ is not a product in the isovariant category because if $Y$ has nontrivial $G$-action, the projection $X \times Y \to X$ is not necessarily isovariant. Thus we must delete subspaces of the underlying product to define the isovariant product.
\begin{defn}
	The isovariant product of $G$-spaces is the subspace of the product in $\topp$ comprised of tuples of points which all have the same isotropy group. We denote the isovariant product of two $G$-spaces by $X \overset{\mathsf{isvt}}{\times} Y$. 
\end{defn}

For example, if $X$ has a trivial $G$ action then $X \overset{\mathsf{isvt}}{\times} G$ is empty. If $X=[-1,1]$ with $C_2$-action given by negation, the isovariant product is 
\[X \overset{\mathsf{isvt}}{\times} X = [-1,1] \times [-1,1] \setminus \left( \{0\} \times [-1,1] \cup [-1,1] \times \{0\}\right) \, \cup \, (0,0).\] 
That is, the isovariant product $X \overset{\mathsf{isvt}}{\times} X$ is made up of points with either both coordinates zero or both nonzero. 

Thus $\itop$ also has all (nonempty) small limits, since it has all nonempty products, and all equalizers.  
To complete the category, we must assign it a formal terminal object $T$. We denote this new category $\itop^\term$. We show now that this is a cofibrantly generated \emph{model category}, that is, a category with all small limits and colimits with a cofibrantly generated model structure.

\begin{prop} \label{modelcatisvt}
	The category $\itop^\term$ is a cofibrantly generated model category with the same generating cofibrations and generating acyclic cofibrations as in Proposition \ref{modelstrisvt}. The new weak equivalences are $\mathcal{W}^\term$.
\end{prop}

\begin{proof}
	We will add some of the new maps $X \to T$ to the classes of fibrations and weak equivalences. A map $X \to T$ is in $\mathcal{W}^\term$ if $X$ is isovariantly contractible, that is, $\imap(\Delta^\mathbf{H},X)\simeq \ast$ for each chain of subgroups $\mathbf{H}$. We assume that the formal terminal object $T$ is isovariantly contractible. Thus a map $f:X \to T$ is a weak equivalence if $\imap(\Delta^\mathbf{H},f)$ is a weak equivalence of spaces for each $\mathbf{H}$.
	We will check the conditions of \cite{hovey}[2.1.19] to show that $\itop^\term$ is a cofibrantly generated model category, where $I=\{\Delta^\mathbf{H}_G \times S^{n-1} \to \Delta^\mathbf{H}\times D^n\}$ and $J=\{\Delta^\mathbf{H}_G \times D^n \to \Delta^\mathbf{H}\times D^n \times [0,1]\}$, as in Proposition \ref{modelstrisvt}. 

	For clarity, when refering to classes of maps in $\itop^\term$ with lifting properties against the generating (acyclic) cofibrations, we will use notation $I^\term$ (respectively, $J^\term$). For example $I$-inj $\subseteq I^\term$-inj, because there are more maps in $\itop^\term$ which have the right lifting property with respect to $I$ than in $\itop$.

	The domains of $I$ and $J$ have not changed, so are still small relative to $I$-cell and $J$-cell. The subcategory $\mathcal{W}^\term$ is closed under retracts, since the only new retracts of maps in $\mathcal{W}$ are of the form $Y \to T$, where $Y$ is a retract of $X$, so is also isovariantly contractible. The subcategory $\mathcal{W}$ is closed under 2-out-of-3, as the new compositions to consider are of the form $X \to Y \to T$ or $X \to T \to T$, and weak equivalences of spaces are closed under 2-out-of-3.

	To show that $I^\term$-inj$=\mathcal{W}^\term \cap J^\term$-inj, we use the adjunction of Proposition \ref{adjunction}. The map $f$ in $\itop^\term$ is in $I^\term$-inj if and only if $f$ has the right lifting property with respect to maps in $I$. By adjunction, for each chain of subgroups $\mathbf{H}$, $\imap(\Delta^\mathbf{H},f)$ has the right lifting property with respect to maps in $I^\topp=\{S^{n-1} \to D^{n}\}$. Because $\topp$ is a cofibrantly generated model category, this holds if and only if $\imap(\Delta^\mathbf{H},f) \in \mathcal{W}^\topp \cap J^\topp$-inj. By definition, $\imap(\Delta^\mathbf{H},f) \in \mathcal{W}^\topp$ for all $\mathbf{H}$ if and only if $f \in \mathcal{W}^\term$, and $\imap(\Delta^\mathbf{H},f) \in J^\topp$-inj if and only if $f \in J^\term$-inj.

	Finally, $J$-cell $\subseteq \mathcal{W} \cap I$-cof, because there are no new elements of $J$-cell in $\itop^\term$ that were not in $\itop$. Since $ \mathcal{W} \subseteq \mathcal{W}^\term$ and $I$-cof $\subseteq I^\term$-cof, this completes the proof.
\end{proof}

We note that the new fibrations of $\itop^\term$ are $X \to T$ which have the right lifting property against the generating acyclic cofibrations and the new acyclic fibrations are $X \to T$ which lift against the generating cofibrations. 
Since $T$ is formal, the property of being a fibrant object of $\itop^\term$ is the same as possessing an extension property along the generating acyclic cofibrations.

\vspace{.15in}

\section{Isovariant Elmendorf's theorem} \label{elmsection}

In this section, we show that for a finite group $G$, the category $\itop^\term$ of $G$-spaces with isovariant maps given the model structure of Proposition \ref{modelcatisvt} is Quillen equivalent to a category of functors on the link orbit category. This is analogous to Elmendorf's theorem for equivariant spaces, where the diagrams are on the orbit category $\mathcal{O}_G$ \cite{elmthm, alaskanotes}. The argument is mostly formal, except for the proof that the links $\imap(\Delta^\mathbf{H}_G,-)$ preserve certain homotopy colimits up to homotopy.

We will denote by $\fun(\mathcal{L}_G^{op},\mathsf{Top})$ the category whose objects are functors $F:\mathcal{L}_G^{op} \to \mathsf{Top}$ with morphisms given by natural transformations.
Because $\mathcal{L}_G^{op}$ is small, the cofibrantly generated model structure on $\mathsf{Top}$ can be transferred across an adjunction to endow $\fun(\mathcal{L}_G^{op},\mathsf{Top})$ with the projective model structure, where the weak equivalences and fibrations are levelwise, and generating cofibrations are given by $\{ \Hom_{\mathcal{L}_G}(-,\mathbf{H}) \times I\}$, where $I$ denotes the collection of generating cofibrations of $\topp$ \cite{hirsch}. More details about the projective model structure on diagrams can be found in \cite{piacenza} or \cite[11.6.1]{hirsch}.

\begin{thm} \label{mainthm}
	Let $G$ be a finite group. The following adjunction is a Quillen equivalence.
	\[
\Delta^\bullet_G \otimes_{\mathcal{L}_G} -: \xymatrix{ \mathsf{Fun}(\mathcal{L}^{op}_G, \mathsf{Top}) \ar@<1ex>[r] & \itop^\term \ar@<1ex>[l]}: \imap(\Delta^\bullet_G,-).
	\]
\end{thm}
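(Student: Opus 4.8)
The strategy is the standard one for Piacenza-style Elmendorf theorems, adapted to the link orbit category: verify that the adjunction is a Quillen adjunction, and then check that the derived unit and counit are weak equivalences. First I would check the Quillen adjunction. Because the model structure on $\fun(\mathcal{L}_G^{op},\mathsf{Top})$ is projective and that on $\itop$ is transferred along the adjunctions $\{\Delta_G^{\mathbf H}\times-\}$, the right adjoint $\imap(\Delta^\bullet_G,-)$ preserves fibrations and acyclic fibrations essentially by construction: for a fibration (resp.\ acyclic fibration) $f:X\to Y$ in $\itop$, the map $\imap(\Delta^{\mathbf H}_G,X)\to\imap(\Delta^{\mathbf H}_G,Y)$ is a fibration (resp.\ acyclic fibration) of spaces for every $\mathbf H$, which is precisely the condition for $\imap(\Delta^\bullet_G,f)$ to be a projective fibration (resp.\ acyclic fibration). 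Hence $(\Delta^\bullet_G\otimes_{\mathcal L_G}-,\imap(\Delta^\bullet_G,-))$ is a Quillen adjunction.

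Next I would identify the left adjoint and the (co)unit concretely. The functor $\Delta^\bullet_G\otimes_{\mathcal L_G}-$ is the coend $\int^{\mathbf K\in\mathcal L_G}\Delta^{\mathbf K}_G\times F(\mathbf K)$; applied to a representable $\Hom_{\mathcal L_G}(-,\mathbf H)$ it yields $\Delta^{\mathbf H}_G$ (co-Yoneda), so the left adjoint sends the generating cofibrations $\Hom_{\mathcal L_G}(-,\mathbf H)\times(S^{n-1}\to D^n)$ of the projective structure to the generating cofibrations $\Delta^{\mathbf H}_G\times(S^{n-1}\to D^n)$ of $\itop$, and likewise for the generating acyclic cofibrations (this reconfirms the Quillen adjunction and, more importantly, sets up the cell-by-cell induction below). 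The unit of the adjunction at a diagram $F$ is the natural map $F(\mathbf H)\to\imap(\Delta^{\mathbf H}_G,\Delta^\bullet_G\otimes_{\mathcal L_G}F)$; I would first check it is an isomorphism (hence a weak equivalence) when $F$ is representable, using the computation $\imap(\Delta^{\mathbf H}_G,\Delta^{\mathbf K}_G)\cong\Hom_{\mathcal L_G}(\mathbf H,\mathbf K)$ — i.e.\ that isovariant maps between linking simplices are exactly the $\Delta^{(\iota,\gamma)}$, which is the isovariant analogue of the classical fact $\emap(G/H,G/K)\cong(G/K)^H$ and should be proved by a direct inspection of which $G$-equivariant maps $G\times\Delta^n/\!\sim\,\to G\times\Delta^m/\!\sim$ preserve isotropy.

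Then I would promote this to all cofibrant $F$. Since every cofibrant object of $\fun(\mathcal L_G^{op},\mathsf{Top})$ is a retract of a cell complex built from $\Hom_{\mathcal L_G}(-,\mathbf H)\times S^{n-1}\to\Hom_{\mathcal L_G}(-,\mathbf H)\times D^n$, and the left adjoint carries such a cell structure to the corresponding $\itop$-cell structure, the derived unit is a weak equivalence for all cofibrant $F$ provided the functors $\imap(\Delta^{\mathbf H}_G,-)$ commute, up to homotopy, with the colimits appearing in the cell attachments — filtered (transfinite) colimits along cofibrations and pushouts of cofibrations. This is exactly the content of Lemma~\ref{linkscellular} (the ``links are cellular'' lemma advertised in the introduction), together with the evident fact that $\imap(\Delta^{\mathbf H}_G,-)$ preserves the relevant homotopy equivalences; I would run the standard induction over the cell filtration, using Lemma~\ref{linkscellular} at the colimit and pushout steps and the representable base case at the attaching-cell step, to conclude the derived unit is a levelwise weak equivalence. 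Finally, for the derived counit: given a fibrant $X\in\itop$, I must show $\Delta^\bullet_G\otimes_{\mathcal L_G}(Q\,\imap(\Delta^\bullet_G,X))\to X$ is an isovariant weak equivalence, i.e.\ that $\imap(\Delta^{\mathbf H}_G,-)$ of this map is a weak equivalence for every $\mathbf H$; by the triangle identities and the already-established unit statement this reduces to the unit being a weak equivalence, so the two halves are formally linked in the usual way and the counit follows. The genuinely nonformal obstacle is Lemma~\ref{linkscellular} — controlling $\imap(\Delta^{\mathbf H}_G,-)$ on homotopy colimits, which requires the semicontinuity and simplex-deformation arguments mentioned in the introduction; everything else is the standard Piacenza bookkeeping.
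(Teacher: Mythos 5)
Your plan follows the paper's proof essentially step for step: the formal Quillen adjunction from the projective/transferred model structures, reduction of the unit to representables via retracts of cell complexes and Lemma~\ref{linkscellular}, the identification $\Delta^\bullet_G\otimes_{\mathcal{L}_G}\mathcal{L}_G(-,\mathbf{H})\cong\Delta^{\mathbf{H}}_G$, and the closing observation that weak equivalences in $\itop$ are created by $\imap(\Delta^\bullet_G,-)$, so the counit/equivalence criterion reduces to the unit statement by 2-out-of-3. One overstatement to correct: it is not true that isovariant maps $\Delta^{\mathbf{K}}_G\to\Delta^{\mathbf{H}}_G$ are \emph{exactly} the maps $\Delta^{(\iota,\gamma)}$, nor that the unit is an isomorphism on representables --- the mapping space $\imap(\Delta^{\mathbf{K}}_G,\Delta^{\mathbf{H}}_G)$ is not discrete (an isovariant map may, for instance, rescale the simplex coordinates). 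The paper instead shows that $\pi_0$ of this space is $\mathcal{L}_G(\mathbf{K},\mathbf{H})$ and that each path component is contractible, via a straight-line homotopy onto $\Delta^{(\iota,\gamma)}$ using convexity of $\Delta^n$; this gives the weak equivalence you need, and with that substitution your argument matches the paper's.
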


The proof is given in steps. We start with a key technical lemma showing that the functors $\imap(\Delta^\mathbf{H}_G,-)$ are cellular. 

\begin{lemma} \label{linkscellular}
	The functors $\imap(\Delta^{\mathbf{H}}_G,-)$ are homotopically cellular in the sense that they preserve homotopy pushouts and sequential homotopy colimits up to homotopy.
\end{lemma}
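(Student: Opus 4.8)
The plan is to reduce to the standard explicit models for homotopy colimits and then argue directly, controlling an admissible simplex through its cylinder coordinate. First note that every object of $\itop$ is fibrant: the formal terminal object $\ast$ satisfies $\imap(\Delta^{\mathbf{H}}_G,\ast)=\ast$, which is fibrant in $\topp$, so $X\to\ast$ is always an isovariant fibration. Since $\imap(\Delta^{\mathbf{H}}_G,-)$ is right Quillen, Ken Brown's lemma then shows it preserves \emph{all} weak equivalences. Consequently we are free to replace the diagrams at hand by cofibrant --- hence objectwise-closed-inclusion --- replacements, and to model the homotopy colimits by mapping telescopes for directed colimits and by double mapping cylinders for pushouts; it suffices to check that applying $\imap(\Delta^{\mathbf{H}}_G,-)$ to the canonical map out of such a model is a weak equivalence. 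Since geometric realizations are assembled from these two operations, this also yields the commutation with the realizations used in Theorem \ref{mainthm}.

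For the directed case, let $X=\col_i X_i$ with all $X_i\to X_{i+1}$ isovariant closed inclusions. Compactness of $\Delta^{\mathbf{H}}_G$ shows any isovariant $\psi\colon\Delta^{\mathbf{H}}_G\to X$ factors through some $X_i$, and the factorization $\tilde{\psi}\colon\Delta^{\mathbf{H}}_G\to X_i$ is automatically isovariant: $G_p\subseteq G_{\tilde{\psi}(p)}$ by equivariance, while isovariance of the structure map $X_i\to X$ forces $G_{\tilde{\psi}(p)}=G_{\psi(p)}=G_p$. Thus $\imap(\Delta^{\mathbf{H}}_G,-)$ commutes on the nose with these directed colimits, and the directed homotopy-colimit statement follows.

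The substantive case is the homotopy pushout. Model the homotopy pushout of $Y\xleftarrow{p}A\xrightarrow{q}Z$ by the double mapping cylinder $M$ with cylinder coordinate $\pi\colon M\to[0,1]$, so $\pi^{-1}(0)=Y$, $\pi^{-1}(1)=Z$, $\pi^{-1}(0,1)=A\times(0,1)$. Because $p,q$ are isovariant, $\pi$ is compatible with the isotropy stratification: for each subgroup $H$, $M_H$ is the double mapping cylinder of $Y_H\leftarrow A_H\to Z_H$ and $\pi|_{M_H}$ is its cylinder coordinate. Recall the isotropy stratification of $\Delta^{\mathbf{H}}_G$ is recorded by the flag of faces $F_n\subset\cdots\subset F_0=\Delta^n$, where $F_k$ is spanned by the first $n-k+1$ vertices; an isovariant map $\Delta^{\mathbf{H}}_G\to M$ is precisely a $\psi\colon\Delta^n\to M$ carrying each $F_k$ into $M^{H_k}$ with isotropy exactly $H_k$ on $F_k\setminus F_{k+1}$, and we record the continuous function $c=\pi\circ\psi\colon\Delta^n\to[0,1]$. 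Now cover $M$ by the two isovariant open sets $\pi^{-1}[0,\tfrac{2}{3})$ and $\pi^{-1}(\tfrac{1}{3},1]$, which deformation retract isovariantly onto $Y$ and $Z$ with common part retracting onto $A$; the goal is to show that $\imap(\Delta^{\mathbf{H}}_G,M)$ is, via these, the homotopy pushout of $\imap(\Delta^{\mathbf{H}}_G,Y)\leftarrow\imap(\Delta^{\mathbf{H}}_G,A)\to\imap(\Delta^{\mathbf{H}}_G,Z)$. To do so, given $\psi$ one chooses a subdivision of $\Delta^n$ refining the flag $F_\bullet$ (so the subdivided map stays admissible) and fine enough --- by a Lebesgue-number argument for the open cover $\{c^{-1}[0,\tfrac{2}{3}),\,c^{-1}(\tfrac{1}{3},1]\}$ of $\Delta^n$ --- that each closed subsimplex is carried into one of the two open sets; then one deforms $\psi$ by straight-line homotopies in the coordinate $c$, coherently across the subdivision, to assemble a homotopy inverse to the comparison map. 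Semicontinuity enters because $p\mapsto G_{\psi(p)}$ is upper semicontinuous and is constantly $H_k$ on $F_k\setminus F_{k+1}$, whereas $c$ is an honest continuous real-valued function; this is what lets the subdivision and the cylinder-direction deformations be chosen coherently across strata of differing isotropy.

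The main obstacle is precisely this coordination. The cylinder-coordinate deformations and the chosen subdivision must preserve \emph{every} constraint $\psi(F_k)\subseteq M^{H_k}$ simultaneously, and the strata cannot simply be separated because ``isotropy exactly $H_k$'' is an open, not a closed, condition; moreover the subdivision must be organized so as to depend coherently on $\psi$ rather than merely to exist pointwise. It is in handling this --- not in any formal model-categorical manipulation --- that the careful semicontinuity bookkeeping referred to in the introduction is needed.
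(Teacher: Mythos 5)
Your directed-colimit argument and your general framing (reduce to the double mapping cylinder, use the cylinder coordinate $\pi\circ\psi$) match the paper, but the pushout case --- which you correctly identify as the substantive one --- has a genuine gap at exactly the point where you defer to ``coordination.'' The subdivision-plus-Lebesgue-number strategy does not produce points of the relevant object. The homotopy pushout of $\imap(\Delta^{\mathbf{H}}_G,Y)\leftarrow\imap(\Delta^{\mathbf{H}}_G,A)\to\imap(\Delta^{\mathbf{H}}_G,Z)$ consists of maps of the \emph{whole} linking simplex into $Y$, into $Z$, or into $A$ together with a single cylinder parameter; a subdivision of $\Delta^n$ whose pieces individually land in $\pi^{-1}[0,\tfrac23)$ or $\pi^{-1}(\tfrac13,1]$ is not a point of that space, and there is no evident way to reassemble the pieces into one. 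Moreover, a cylinder-direction straight-line homotopy alone cannot fix this: a $1$-simplex running from a point of $Y\setminus p(A)$ to a point of $Z\setminus q(A)$ cannot be made constant in the cylinder coordinate by any homotopy that only moves points along the $[0,1]$ direction, since its endpoints are stuck at cylinder coordinates $0$ and $1$. Some use of the conical structure of $\Delta^{\mathbf{H}}_G$ is unavoidable --- the statement is simply false for, say, $\map(S^1,-)$ applied to $\ast\leftarrow S^0\to\ast$ --- and your proof never invokes it.

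The paper's proof supplies exactly this missing step: before flattening, each simplex is \emph{truncated}, i.e.\ precomposed with the isovariant scaling of $\Delta^n$ toward the vertex $v_0$, shrinking it until its image moves less than $\tfrac14$ in the cylinder coordinate; only then does the flattening you describe make sense. The semicontinuity that matters is also different from the one you cite: it is not the upper semicontinuity of $p\mapsto G_{\psi(p)}$ on the domain simplex, but the \emph{lower} semicontinuity, on the mapping space $L(\hc\cX)$ itself, of the function $\ell$ recording how far from $v_0$ one can go before the cylinder coordinate changes by $\tfrac14$. Lower semicontinuity plus compactness of the image of a disk $D^n\to L(\hc\cX)$ gives a uniform positive truncation parameter $\ell_{\min}$, which is what makes the truncation-then-flatten retraction continuous in families and hence usable to verify the lifting criterion for $(D^n,S^{n-1})$ against the comparison map $\phi:\hc L(\cX)\to L(\hc\cX)$. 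To repair your argument you would need to replace the subdivision step with this (or an equivalent) uniform shrinking toward $v_0$, checking as the paper does that the scaling is isovariant on $\Delta^{\mathbf{H}}_G$.
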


\begin{proof}
	We need only consider the isovariant mapping spaces from the fundamental domains of the linking simplices $\Delta^{\mathbf{H}}_G$, which are simplices $\Delta^k$ with specified isotropy groups on faces. Further, as long as we do not collapse the dimension of the simplex, the $G$-action is not necessary for the proof. In particular, we may use the stratification on $\Delta^k$ given by the isotropy groups of a fundamental domain of the linking simplex. Let $k \geq 0$ and let $L$ denote the functor $\map_{\strat}(\Delta^k,-):\strat\topp \to \topp$ of continuous stratified maps. 
	Let $\cX$ denote the pushout diagram $B \xleftarrow{f} A \xrightarrow{g} C$, and consider the model for the homotopy pushout given by $\hc \cX=(B\times \{0\}) \coprod_f (A \times [0,1]) \coprod_g (C \times \{1\})$. Define $p_c: \hc \cX \to [0,1]$ to be projection to the cylinder coordinate, with $B$ sent to 0 and $C$ sent to 1. 

	The universal property of homotopy pushouts gives rise to the map $\phi:\hc L(\cX) \to L(\hc \cX)$, which we will show is a weak equivalence. This weak equivalence then restricts to a weak equivalence on the homotopy colimits of mapping spaces on the fundamental domains of the linking simplices, which can then be extended to the full linking simplices. The elements of $\hc L(\cX)$ are maps of simplices which are constant in the cylinder coordinate, or after application of $p_c$. 
	Define a deformation retraction $cr:\hc \cX \to \hc \cX$ which collapses $A \times [0,1/4]$ to $B$ by $f$ and $A \times [3/4, 1]$ to $C$ along $g$ and stretches $A \times (1/4,3/4)$ to $A \times (0,1)$. The choice of $1/4$ is for convenience; any number less than $1/2$ would suffice. Then there is an equivalence $\hc L(\cX) \xrightarrow{\simeq} cr \hc L (\cX)$. 
	Let $n \geq 0$ and consider the commuting diagram for the pairs $(D^n,S^{n-1}) \to (L(\hc \cX), \hc L(\cX))$. We will show $\psi$ lifts up to homotopy by defining a map $D^n \to cr \hc L(\cX)$ which makes the top and bottom triangles commute up to compatible homotopy.
	\[
\xymatrix{ S^{n-1} \ar[r] \ar[d] & \hc L(\cX) \ar[d]_(.35)\phi \ar[r]^{\simeq} & cr \hc L(\cX)  \ar[d] \\
D^n \ar[r]_(.3){\psi} \ar@{-->}[urr] & L(\hc \cX) \ar[r]^\simeq & L(cr \hc \cX)}
	\]

	For a disk of links in the homotopy colimit, that is, $\psi(D^n)$, we will define a continuous truncation of the links that we can then ``flatten out'' in the cylinder direction of the homotopy colimit. 

	To define the truncation, we will define a lower semicontinuous function 
	\[L(\hc \cX) \to (0,1]\] 
	which takes a simplex in the homotopy pushout and returns a real number measuring the shortest distance (in the simplex) from the vertex $v_0=(1,0, \dots, 0)$ to points of the simplex which travel more than 1/4 in the cylinder coordinate of the homotopy pushout. For a simplex of dimension $k=1$, the output measures the first time the path travels more than 1/4 in the cylinder coordinate from its starting point. First, we normalize the starting point (or vertex $v_0$) of all links to be at 0 in $\mathbb{R}$. Let $\iota: [0,1] \to \mathbb{R}$ be inclusion, and let $L_0(\mathbb{R})$ be the space of links in $\mathbb{R}$ that start at 0; that is, $L_0(\mathbb{R})=\{\map(\Delta^k, \mathbb{R}): v_0 \mapsto 0\}$. Let $\gamma \in L(\hc \cX)$, so $\gamma:\Delta^k \to \hc \cX$. Then $\widetilde{\gamma} \in L_0(\mathbb{R})$ is defined by $(t_0, \dots, t_k) \mapsto \iota p_c \gamma(t_0, \dots, t_k) - \iota p_c \gamma(v_0)$. The normalization map $L(\hc \cX) \to L_0(\mathbb{R}): \gamma \mapsto \widetilde{\gamma}$ is continuous. 

	Then define $\ell:L_0(\mathbb{R}) \to (0,1]$ by
	\[\ell(\widetilde{\gamma})=\inf \left\{ \sum_{i=1}^k t_i: | \widetilde{\gamma}(t_0, \dots, t_k)| \geq 1/4\right\}.\]

	Let $0<\alpha <1$. The function $\ell$ is lower semicontinuous if all the sets $U_\alpha=\{ \gamma \in L_0(\mathbb{R}): \ell(\gamma)> \alpha \}$ are open using the compact open topology on $L_0(\mathbb{R})$. Let $\gamma \in U_\alpha$, so $\ell(\gamma)=\alpha+\epsilon$ for some $\epsilon>0$. Define the open set $W \subseteq L_0(\mathbb{R})$ to be the set of functions which take the compact set $\{(t_0, \dots, t_k) \in \Delta^k: \sum_{i=1}^k t_i \leq \alpha + \epsilon/2 \}$ to the open set $(-1/4, 1/4) \subset \mathbb{R}$. We see $\gamma \in W$, since $\ell(\gamma)=\alpha + \epsilon$. For example, in the $k=1$ case, we can think of $\gamma:\Delta^1 \to \mathbb{R}$ as a function of the variable $t_1$. Then $\gamma(\{(t_0,t_1): 0 \leq t_1 \leq \alpha + \epsilon/2 \})$ must fall within the open set $(-1/4,1/4)$, since $\gamma$ takes the point with $t_1=\alpha+\epsilon$ to either 1/4 or -1/4, and no smaller value of $t_1$ leaves the set $(-1/4,1/4)$ by definition of infimum. We also see that $W \subseteq U_\alpha$ since all simplices $\gamma \in W$ have the property that $\ell(\gamma)> \alpha+\epsilon/2>\alpha$. Thus we have shown that $U_\alpha$ is open and $\ell$ is lower semicontinuous.

	The composition of $\ell$ with the normalization $L(\hc \cX) \to L_0(\mathbb{R}) \to (0,1]$ is also lower semicontinuous. The image $\psi(D^n)$ in $L(\hc \cX)$ is a compact subset, so the composition $D^n \to (0,1]$ is lower semicontinuous and achieves its minimum value, $\ell_{min}$. Then there is a \emph{continuous} function $D^n \to (0,1]$ which takes the constant value $\ell_{min}$ on the entire disk.

	Thus we have a continuous truncation for a disk of links. That is, for $\gamma$ in the image $\psi(D^n)$, let $\gamma_{\ell}: \Delta^k \to \hc \cX$ denote precomposition of $\gamma$ with the map which linearly scales $\Delta^k$ to $\{(t_0,\dots,t_k) \in \Delta^k: \sum_{i=1}^k t_i \leq \ell_{min} \}$. (Note this scaling is isovariant on $\Delta^\mathbf{H}_G$.) The simplex $\gamma_\ell$ is a truncated version of $\gamma$ which stretches no more than 1/4 in the cylinder coordinate. 

	The last step of the map $D^n \to cr \hc L(\cX)$ is to flatten the truncated links to be constant in the cylinder direction. We do this piecewise by postcomposing with projection. If $p_c \gamma(v_0) \in (1/4,3/4)$, project $\gamma_\ell$ (which remains completely in the cylinder) along the cylinder coordinate to the element of $\hc L(\cX)$ with $p_c \gamma(v_0)$ in the cylinder coordinate. (This can be done isovariantly, since the cylinder of the homotopy pushout has trivial group action.) If $p_c \gamma(v_0) \in [0,1/4]$, apply the map $f:A \to B$ to $\gamma_\ell$ to push the link into $B$. Similarly, if $p_c \gamma(v_0) \in [3/4,1]$, use $g:A \to C$ to push $\gamma_\ell$ to $C$. Since the truncation is defined continuously on the disk of links $\psi(D^n)$, this defines a continuous function from the image of the disk $D^n$ in $L(\hc \cX)$ to $cr \hc L(\cX)$.

	The image of the map $S^{n-1} \to \hc L(\cX)$ must contain links which are constant in the cylinder coordinate, and these remain constant after collapsing the first and last quarter of the cylinder using $cr$. Thus the homotopy of the top triangle involves the (isovariant) deformation retraction between the links and their truncations $\gamma_\ell$. The bottom triangle commutes up to homotopy using the same deformation, but also using the straight line homotopy through the cylinder coordinate to unflatten the links. 

	Thus the map $\phi: \hc L(\cX) \to L(\hc \cX)$ is a weak equivalence of spaces and $\imap(\Delta^\mathbf{H}_G,-)$ commutes with homotopy pushouts. The proof for directed homotopy colimits proceeds similarly, using the mapping telescope and collapsing only $[3/4,1]$ in each interval. 
\end{proof}

\begin{prop}
	The maps in Theorem \ref{mainthm} form a Quillen adjunction.
\end{prop}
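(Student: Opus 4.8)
The plan is to verify that the pair $(\Delta^\bullet_G \otimes_{\mathcal{L}_G} -, \imap(\Delta^\bullet_G,-))$ is a Quillen adjunction by checking that the right adjoint $\imap(\Delta^\bullet_G,-)$ preserves fibrations and acyclic fibrations. First I would recall that the right adjoint sends an isovariant $G$-space $X$ to the diagram $\mathbf{H} \mapsto \imap(\Delta^\mathbf{H}_G, X)$ on $\mathcal{L}_G^{op}$, that the model structure on $\fun(\mathcal{L}_G^{op},\mathsf{Top})$ is projective (so fibrations and acyclic fibrations are detected levelwise), and that by the transferred model structure on $\itop$ a map $f:X\to Y$ is a fibration (resp. acyclic fibration) exactly when each $\imap(\Delta^\mathbf{H}_G,f)$ is a fibration (resp. acyclic fibration) of spaces for all $\mathbf{H}\in\mathcal{L}_G$.

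With these facts in hand the argument is essentially immediate: given a fibration (resp. acyclic fibration) $f:X\to Y$ in $\itop$, the definition of the transferred model structure says $\imap(\Delta^\mathbf{H}_G,f)$ is a fibration (resp. acyclic fibration) of spaces for every object $\mathbf{H}$, which is exactly the statement that $\imap(\Delta^\bullet_G,f)$ is a levelwise, hence projective, fibration (resp. acyclic fibration) of $\mathcal{L}_G^{op}$-diagrams. Thus the right adjoint preserves both classes, so by the standard characterization (e.g.\ \cite{hovey}) the adjunction is a Quillen adjunction. One should also note that the adjunction itself is well-defined: the bifunctor $\Delta^\bullet_G \otimes_{\mathcal{L}_G} -$ is the coend over $\mathcal{L}_G$ of the functor $\Delta^\bullet_G:\mathcal{L}_G\to\itop$ with a diagram, and its right adjoint is computed pointwise as $\imap(\Delta^\mathbf{H}_G,-)$; this is a formal consequence of the enriched Yoneda/coend calculus together with the adjunctions $\Delta^\mathbf{H}_G\times - \dashv \imap(\Delta^\mathbf{H}_G,-)$ established earlier, and the fact that $\itop$ is cocomplete (and has the formal terminal object needed for completeness).

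I do not expect any genuine obstacle here: this proposition is the purely formal half of Theorem~\ref{mainthm}, and the only subtlety is bookkeeping — making sure the generating (acyclic) cofibrations of the projective model structure on $\fun(\mathcal{L}_G^{op},\mathsf{Top})$, namely $\{\Hom_{\mathcal{L}_G}(-,\mathbf{H})\times f : f\in I\}$ (resp. $f\in J$), are sent by the left adjoint to (acyclic) cofibrations in $\itop$. Here one uses that $\Delta^\bullet_G\otimes_{\mathcal{L}_G}\Hom_{\mathcal{L}_G}(-,\mathbf{H})\cong \Delta^\mathbf{H}_G$ by the coend form of the Yoneda lemma, so the left adjoint carries the generating cofibration $\Hom_{\mathcal{L}_G}(-,\mathbf{H})\times (S^{n-1}\to D^n)$ to $\Delta^\mathbf{H}_G\times(S^{n-1}\to D^n)$, which is precisely a generating cofibration of $\itop$ from the previous section — and similarly for the generating acyclic cofibrations. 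Checking left-adjoints-preserve-(acyclic)-cofibrations on generators suffices since both classes are closed under the relevant cellular constructions, which gives an alternative, equally short route to the same conclusion.
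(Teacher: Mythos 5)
Your proposal is correct and follows essentially the same route as the paper: the right adjoint preserves fibrations and acyclic fibrations because both the transferred model structure on $\itop$ and the projective model structure on $\fun(\mathcal{L}_G^{op},\mathsf{Top})$ detect these classes levelwise via the functors $\imap(\Delta^\mathbf{H}_G,-)$. The extra remarks about the coend/Yoneda identification and the generating-cofibration check are consistent with the paper but not needed for this formal step.
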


\begin{proof}
	This is formal using the projective model structure on the category of $\mathcal{L}_G^{op}$-diagrams. In a Quillen adjunction, the right adjoint $\imap(\Delta_G^\bullet,-)$ preserves fibrations and acyclic fibrations. A fibration (resp., weak equivalence) $f$ in $\itop^\term$ is a map for which $\imap(\Delta_G^\mathbf{H},f)$ is a fibration (resp., weak equivalence) in $\mathsf{Top}$ for all $\mathbf{H}$. This defines a natural transformation which is levelwise a fibration (resp., weak equivalence). 
\end{proof}

\begin{prop}
	For cofibrant $F \in \mathsf{Fun}(\mathcal{L}^{op}_G, \mathsf{Top})$, the unit $\eta:F \to \imap(\Delta^\bullet_G, \Delta^\bullet_G \otimes_{\mathcal{L}_G} F)$ is a weak equivalence.
\end{prop}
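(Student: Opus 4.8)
The plan is the standard Elmendorf bootstrap: verify $\eta$ on representable diagrams, extend to products of representables with spaces, and then propagate along the cell attachments of a cofibrant $T$, using Lemma~\ref{linkscellular} as the only nonformal input.

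\textbf{Step 1 (the crux): representables.} By the co-Yoneda isomorphism $\Delta^\bullet_G \otimes_{\mathcal{L}_G} \Hom_{\mathcal{L}_G}(-,\mathbf{H}) \cong \Delta^{\mathbf{H}}_G$, unwinding the adjunction unit shows that on $T = \Hom_{\mathcal{L}_G}(-,\mathbf{H})$ the map $\eta$ is, at a chain $\mathbf{K}$, the map
\[
\Hom_{\mathcal{L}_G}(\mathbf{K},\mathbf{H}) \longrightarrow \imap(\Delta^{\mathbf{K}}_G, \Delta^{\mathbf{H}}_G), \qquad (\iota,\gamma) \longmapsto \Delta^{(\iota,\gamma)}.
\]
I would show this is a weak equivalence by proving $\imap(\Delta^{\mathbf{K}}_G,\Delta^{\mathbf{H}}_G)$ is homotopy discrete with $\pi_0$ identified by it. An isovariant $\phi \colon \Delta^{\mathbf{K}}_G \to \Delta^{\mathbf{H}}_G$ is $G$-equivariant, hence determined by its restriction to the fundamental simplex $\{e\}\times\Delta^m \subseteq \Delta^{\mathbf{K}}_G$, and the vertex of that simplex with isotropy $K_j$ must map into the stratum of $\Delta^{\mathbf{H}}_G$ consisting of points with isotropy exactly $K_j$; since $\Delta^{\mathbf{H}}_G$ is a finite CW complex these strata have finitely many components, so the resulting ``stratum assignment'' is locally constant on the mapping space and splits it into a finite coproduct. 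A stratum assignment is realizable precisely when it is $G$-equivariantly compatible, i.e.\ when it records a pair $(\iota,\gamma)$ with $K_j = H_{\iota(j)}$ and $\gamma$ in the multi Weil group. On each such piece the straight-line homotopy toward the affine model $\Delta^{(\iota,\gamma)}$ stays isovariant, because the cells of $\Delta^{\mathbf{H}}_G$ are convex and the homotopy keeps each point in its stratum — the same kind of controlled simplex deformation as in Lemma~\ref{linkscellular} — which deformation retracts the piece onto $\Delta^{(\iota,\gamma)}$.

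\textbf{Step 2: products with spaces.} For a space $S$, since $-\times S$ preserves colimits we get $\Delta^\bullet_G \otimes_{\mathcal{L}_G}(\Hom_{\mathcal{L}_G}(-,\mathbf{H})\times S) \cong \Delta^{\mathbf{H}}_G \times S$, and at level $\mathbf{K}$ a map $\Delta^{\mathbf{K}}_G \to \Delta^{\mathbf{H}}_G \times S$ is isovariant iff its first coordinate is (the second coordinate lands in a trivially-acting space, so it is automatically $G$-invariant and irrelevant to isotropy). Hence $\imap(\Delta^{\mathbf{K}}_G, \Delta^{\mathbf{H}}_G\times S) \cong \imap(\Delta^{\mathbf{K}}_G,\Delta^{\mathbf{H}}_G)\times \map(\Delta^{\mathbf{K}}_G/G, S)$, and as $\Delta^{\mathbf{K}}_G/G \cong \Delta^m$ is contractible, $\map(\Delta^{\mathbf{K}}_G/G, S)\simeq S$ via constant maps. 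Combined with Step 1, $\eta$ is a levelwise weak equivalence on every $\Hom_{\mathcal{L}_G}(-,\mathbf{H})\times S$ — in particular on the domains and codomains of the generating cofibrations $\Hom_{\mathcal{L}_G}(-,\mathbf{H})\times(S^{n-1}\hookrightarrow D^n)$.

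\textbf{Step 3: cell induction, and the obstacle.} Since weak equivalences are closed under retracts and every cofibrant $T$ is a retract of a transfinite composition of pushouts of coproducts of generating cofibrations, it suffices to treat such a cell complex $T'$. The left adjoint $\Delta^\bullet_G \otimes_{\mathcal{L}_G}-$ preserves all colimits and sends the generating cofibrations to the generating cofibrations $\Delta^{\mathbf{H}}_G\times(S^{n-1}\hookrightarrow D^n)$ of $\itop$, so it carries the cell presentation of $T'$ to a diagram built from homotopy pushouts and directed homotopy colimits; the right adjoint, which is levelwise $\imap(\Delta^{\mathbf{H}}_G,-)$, preserves coproducts and, by Lemma~\ref{linkscellular}, homotopy pushouts and directed homotopy colimits up to homotopy. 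Thus $\eta$ is a map between two diagrams of the same homotopy-colimit shape which is a weak equivalence on the initial object ($\emptyset\to\emptyset$) and on each attached cell by Step 2; a transfinite induction — invoking the gluing lemma for homotopy pushouts at successor stages and invariance of directed homotopy colimits at limit stages — gives that $\eta_{T'}$, hence $\eta_T$, is a weak equivalence. The only genuinely nonformal ingredient beyond Lemma~\ref{linkscellular} is Step~1, the identification of $\imap(\Delta^{\mathbf{K}}_G,\Delta^{\mathbf{H}}_G)$ as homotopy discrete on $\Hom_{\mathcal{L}_G}(\mathbf{K},\mathbf{H})$ — in particular getting $\pi_0$ exactly right, which requires understanding how the strata of $\Delta^{\mathbf{H}}_G$ break into components; everything else is formal model-categorical bookkeeping.
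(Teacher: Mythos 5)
Your proposal is correct and follows essentially the same route as the paper: reduce via retracts and Lemma~\ref{linkscellular} to the representables, identify $\Delta^\bullet_G \otimes_{\mathcal{L}_G} \Hom_{\mathcal{L}_G}(-,\mathbf{H}) \cong \Delta^{\mathbf{H}}_G$ by co-Yoneda, and show $\imap(\Delta^{\mathbf{K}}_G,\Delta^{\mathbf{H}}_G)$ is homotopy discrete with $\pi_0 = \mathcal{L}_G(\mathbf{K},\mathbf{H})$ via straight-line homotopies onto the affine models $\Delta^{(\iota,\gamma)}$. Your Steps 2 and 3 simply make explicit the bookkeeping that the paper compresses into one sentence.
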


\begin{proof}
To show that the unit $\eta$ is a weak equivalence of diagrams requires showing that it is a levelwise weak equivalence of spaces. If $F$ is a cofibrant $\mathcal{L}^{op}_G$-diagram, then $F$ is a retract of an $\mathcal{L}_G(-,\mathbf{H})\otimes I$-cell complex \cite{piacenza}. Since weak equivalences are closed under retracts, we need only show that $\eta$ is an equivalence for cell complexes. By Lemma \ref{linkscellular}, the functors $\imap(\Delta^{\mathbf{H}}_G,-)$ preserve homotopy pushouts and sequential homotopy colimits, so it is enough to show $\eta$ is an equivalence on the representable functors $\mathcal{L}_G(-,\mathbf{H})$.

Formally, $\Delta^\bullet_G \otimes_{\mathcal{L}_G} \mathcal{L}_G(\bullet,\mathbf{H}) \cong \Delta^{\mathbf{H}}_G$, using the following adjunctions for a $G$-space $X$ and the Yoneda lemma. 
\[
\Hom_\mathsf{isvt}(\Delta^\bullet_G \otimes_{\mathcal{L}_G} \mathcal{L}_G(\bullet,\mathbf{H}), X) \cong \Hom_{\mathsf{Fun}}(\mathcal{L}_G(\bullet,\mathbf{H}), \imap(\Delta^\bullet_G, X))\cong \Hom_\mathsf{isvt} (\Delta^\mathbf{H}_G, X)
\]

Then the desired levelwise weak equivalence boils down to showing
\[
\mathcal{L}_G(\mathbf{K},\mathbf{H}) \simeq \imap(\Delta^\mathbf{K}_G, \Delta^\mathbf{H}_G).
\]
The morphisms in $\mathcal{L}_G$ have the discrete topology, so we will show that $\imap(\Delta^\mathbf{K}_G, \Delta^\mathbf{H}_G)$ is a disjoint union of contractible components. 

Let $f:\Delta^\mathbf{K} \to \Delta^\mathbf{H}$ be an isovariant map. Then $\langle g,x \rangle =\langle g,(t_0, \dots, t_{n-i},0 \dots, 0)\rangle  \in \Delta^\mathbf{K}$ is fixed by $gK_ig^{-1}$ under the $G$-action, so by isovariance $f(\langle g,x\rangle )$ is also fixed by $gK_ig^{-1}$ and there is a $j_i$ such that $H_{j_i}$ is conjugate to $K_i$. Thus the subgroups of the chain $\mathbf{K}$ are conjugate to subgroups in $\mathbf{H}$. 
Further, $f$ corresponds to an element $\gamma \in G$ which simultaneously conjugates $K_i$ to $H_{j_i}$. This can be seen by using an element $\gamma \in G$ which conjugates $K_0$ to $H_{j_0}$. By continuity of $f$, the same $\gamma \in G$ will work for all boundaries of the simplex, thus for all $K_i$.
That is, every element of $\imap(\Delta^\mathbf{K}, \Delta^\mathbf{H})$ corresponds to an element $(\iota, \overline{\gamma})$ of $\mathcal{L}_G(\mathbf{K},\mathbf{H})$. 

Two distinct elements $(\iota,\overline{\gamma}), (\iota',\overline{\gamma'})$ of $\mathcal{L}_G(\mathbf{K},\mathbf{H})$ produce two maps $\Delta^{(\iota, \overline{\gamma})}, \Delta^{(\iota',\overline{\gamma'})}$ in different path components of $\imap(\Delta^\mathbf{K}, \Delta^\mathbf{H})$ because there is no \emph{isovariant} homotopy between them. 

By convexity of $\Delta^n$, any isovariant map $\Delta^\mathbf{K} \to \Delta^\mathbf{H}$ corresponding to $(\iota, \overline{\gamma})$ is isovariantly homotopic to $\Delta^{(\iota,\overline{\gamma})}$ through a straight-line homotopy. Thus $\pi_0(\imap(\Delta^\mathbf{K},\Delta^\mathbf{H}))\cong \mathcal{L}_G(\mathbf{K},\mathbf{H})$. Since the identity on the $(\iota, \overline{\gamma})$ component of $\imap(\Delta^\mathbf{K}, \Delta^\mathbf{H})$ is homotopic to the constant map $\Delta^{(\iota,\overline{\gamma})}$, each path component of $\imap(\Delta^\mathbf{K}, \Delta^\mathbf{H})$ is contractible.
\end{proof}

\begin{proof}[Proof of Theorem \ref{mainthm}]
The adjunction of Theorem \ref{mainthm} is a Quillen equivalence if the following condition holds: $\Delta^\bullet_G \otimes F \to Y$ is a weak equivalence in $\itop^\term$ if and only if $F \to \imap(\Delta^\bullet_G, Y)$ is a weak equivalence of diagrams $\mathsf{Fun}(\mathcal{L}_G^{op},\mathsf{Top})$, where the diagram $F$ is cofibrant and the isovariant space $Y$ is fibrant.

The map $F \to \imap(\Delta^\bullet_G, Y)$ factors as $F \xrightarrow{\simeq} \imap(\Delta^\bullet_G, \Delta^\bullet_G \otimes F) \to \imap(\Delta^\bullet_G, Y)$. The 2-out-of-3 property for weak equivalences completes the proof.
\end{proof}








\bibliography{mybib}{}
\bibliographystyle{amsalpha}

\end{document}